\newcommand{\bea}{\begin{eqnarray}}
\newcommand{\eea}{\end{eqnarray}}
\newcommand{\bna}{\begin{eqnarray*}}
\newcommand{\ena}{\end{eqnarray*}}
\numberwithin{equation}{section}
\theoremstyle{plain}
\newtheorem{lemma}{Lemma}[section]
\newtheorem{theorem}[lemma]{Theorem}
\newtheorem{proposition}[lemma]{Proposition}
\theoremstyle{definition}
\newtheorem{remark}{Remark}
\newtheorem{conjecture}[lemma]{Conjecture}
\renewcommand{\Im}{\operatorname{Im}}
\title{Explicit bounds for the graphicality of the prime gap sequence}
\author{Keshav Aggarwal}
\address{{Department of Mathematics, Indian Institute of Technology Bombay, Mumbai, India}}
\email{{keshav@math.iitb.ac.in}}
\author{Robin Frot}
\address{DEDAUB Ltd., Malta Life Sciences Park, San Gwann, SGN 3000, Malta}
\email{{robin.frot@gmail.com}}
\author{Haozhe Gou}
\address{School of Mathematics, Shandong University, Jinan 250100, China}
\email{hodgegou@mail.sdu.edu.cn}
\author{Hui Wang}
\address{The University of Hong Kong Shenzhen Institute of Research and Innovation, 
Shenzhen, Guangdong 518052, China}
\email{wh0315@mail.sdu.edu.cn}
\subjclass[2020]{11N05, 05C07, 11M26, 05C70.}
\keywords{Prime gaps, explicit estimates, graphic sequences, DPG-process.}
\begin{document}

\maketitle

\centerline{\emph{Dedicated to J\'anos Pintz on the occasion of his 75th birthday}}

\begin{abstract}
We establish explicit unconditional results on the graphic properties of the prime gap sequence.
Let \( p_n \) denote the \( n \)-th prime number (with $p_0=1$)
and \( \mathrm{PD}_n = (p_\ell - p_{\ell-1})_{\ell=1}^n \) be the sequence of the first \( n \) prime gaps. Building upon the recent work
by Erd\H{o}s \emph{et al}, 
which proved the graphic nature of \( \mathrm{PD}_n \) for large \( n \) unconditionally, and for all \( n \) under RH, we provide the first explicit unconditional threshold such that:
\begin{enumerate}
\item For all \( n \geq \exp\exp(30.32) \), \( \mathrm{PD}_n \) is graphic.
\item For all \( n \geq \exp\exp(34.33) \), every realization \( G_n \) of \( \mathrm{PD}_n \)
satisfies that \( (G_n, p_{n+1}-p_n) \) is DPG-graphic.
\end{enumerate}

Our proofs utilize a more refined criterion for when a sequence is graphic, and better estimates for the first moment of large prime gaps proven through an explicit zero-free region and explicit zero-density estimate for the Riemann zeta function.  
        \end{abstract}

\section{Introduction}\label{sec:introduction}
Let $p_n$ denote the $n$-th prime number with $p_0=1$.
We call the sequence $$\mathrm{PD}_n=\left(p_{\ell}-p_{\ell-1}\right)_{\ell=1}^{n}=(1,1,2,2,4,2,\cdots,{p_n-p_{n-1}})$$
        the \textit{first $n$ prime gap sequence}.
        A sequence of nonnegative integers \( (d_\ell)_{\ell=1}^{n}\) is said to be \textit{graphic}
        if there exists a simple undirected graph with $n$ vertices
        whose degrees are exactly \(d_1, d_2, \dots, d_n\). {We call a simple graph on $n\ge2$ vertices a \emph{prime gap graph} if its vertex degrees are $\mathrm{PD}_n$.}

        Two key conjectures have inspired this study:
        \begin{conjecture}[Existence; Toroczkai, 2016]\label{Conj1}
                For every ${n \geq2} $, there exists a prime gap graph on \( n \) vertices.
        \end{conjecture}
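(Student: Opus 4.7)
My plan is to apply the Erdős--Gallai criterion. Writing $d^*_1 \ge d^*_2 \ge \cdots \ge d^*_n$ for the weakly decreasing rearrangement of $\mathrm{PD}_n$, the sequence is graphic if and only if $\sum_i d^*_i$ is even and
\[
\sum_{i=1}^{k} d^*_i \;\le\; k(k-1) + \sum_{i=k+1}^{n}\min(d^*_i,\,k) \qquad (1\le k \le n).
\]
The parity condition is immediate, since $\sum_{\ell=1}^n (p_\ell-p_{\ell-1})=p_n-1$ is even whenever $p_n$ is odd, i.e.\ for every $n\ge 2$. The conjecture therefore reduces to verifying the displayed inequalities uniformly for all $n\ge 2$ and all $k$.

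To cover \emph{every} $n$, I would split the argument at a cutoff $N_\ast$. For $n\le N_\ast$, the inequalities are checked by direct computation against tables of primes (current tables comfortably reach $n\sim 10^{13}$); since only finitely many $(n,k)$ pairs appear below $N_\ast$, this phase is finite, if expensive. For $n>N_\ast$, I would argue analytically, splitting within each $n$ by the size of $k$: small $k$ is handled by the best unconditional maximal-gap bound $p_{\ell+1}-p_\ell\ll p_\ell^{0.525}$ (Baker--Harman--Pintz); large $k$ comparable to $n$ is trivial, since the left-hand side is at most $p_n-1$ while the right-hand side grows quadratically in $k$; and the intermediate regime is controlled by explicit estimates on the first moment of large prime gaps $\sum_{p\le x,\,p-p^->H}(p-p^-)$, derived, as elsewhere in the paper, from an explicit zero-free region and zero-density estimate for $\zeta(s)$.

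The decisive obstacle---and the reason the conjecture in its full form remains open unconditionally---is the chasm between these two regimes: the analytic argument of the paper requires $n\ge\exp\exp(30.5)$, whereas exhaustive computation tops out many orders of magnitude earlier. Bridging this gap demands either a drastic reduction of $N_\ast$ (through a wider zero-free region, a stronger density estimate, or a graphicality criterion sharper than Erdős--Gallai that exploits the near-even, slowly-growing structure of $\mathrm{PD}_n$), or a genuinely constructive argument producing a realization of $\mathrm{PD}_n$ directly from arithmetic features of the prime-gap sequence (two $1$'s followed by even gaps whose partial sums recover the primes themselves, perhaps via a Havel--Hakimi-style construction seeded on those two unit gaps). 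I would invest the bulk of the effort in the first route: specifically, in extracting quantitatively optimal bounds on the sum of the $k$ largest prime gaps---not merely the maximum---since these partial-sum averages are exactly what Erdős--Gallai is sensitive to, and any improvement there drives $N_\ast$ down toward the reach of direct computation, thereby collapsing the two ranges into a single uniform proof.
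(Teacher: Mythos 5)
Your proposal correctly recognizes that Conjecture~\ref{Conj1} has no unconditional proof, and the paper never claims one: what is actually proven (Theorem~\ref{MainThm}) is only the tail case $n\geq\exp\exp(30.5)$, with the full conjecture known only under RH (Erd\H{o}s \emph{et al.}). So your sketch is a candid description of the state of the problem rather than a proof, and in broad outline it matches the paper's partial approach. A few details diverge, though, and they matter for an explicit argument. The paper does not grind through the Erd\H{o}s--Gallai inequalities for all $1\leq k\leq n$; it invokes the Zverovich--Zverovich / Tripathi--Vijay refinement (Theorem~\ref{thm:ZZTV}), which restricts verification to $k=m=\max\{i:d_i\geq i\}$ and the jump indices $k<m$ with $d_k>d_{k+1}$, and for the prime gap sequence this collapses to a single family of inequalities $S_N(p_n)\leq(k_N-1)^2+n-1$ parameterized by the gap threshold $N$ --- the ``sharper criterion'' you float as a hypothetical is in fact the paper's starting point. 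For the maximal gap you cite Baker--Harman--Pintz ($p_{\ell+1}-p_\ell\ll p_\ell^{0.525}$), but that estimate carries an inexplicit constant and cannot feed a numerical threshold; the paper instead uses the explicit result of Cully-Hugill and Johnston on primes between consecutive 140th powers, giving $M(p_n)\leq 280\,p_n^{139/140}\big(1+2p_n^{-1/140}\big)^{139}$. Finally, the paper's case split is not a trichotomy in $k$ but a dichotomy in $k_N$ around $\sqrt{p_n}$, with the first-moment estimate (Theorem~\ref{thm:main-thm}) and the maximal-gap bound absorbed together via the auxiliary cutoff $N'=p_n^{1/2}/(3\log p_n)$. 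None of this alters your conclusion: the gulf between $\exp\exp(30.5)$ and computational reach is real and is precisely what the paper leaves open.
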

        \begin{conjecture}[Structural; Toroczkai, 2016]\label{Conj2}
                Every prime gap graph on \( n \) vertices contains \( (p_{n+1} - p_n)/2 \) independent edges.
        \end{conjecture}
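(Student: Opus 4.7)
The plan is to reduce Conjecture~\ref{Conj2}---in the explicit large-$n$ regime treated by item~(2) of the abstract---to a matching problem: ``the prime gap graph contains $(p_{n+1}-p_n)/2$ independent edges'' is the statement $\nu(G_n)\ge (p_{n+1}-p_n)/2$, where $\nu$ denotes the matching number. I would then bound $\nu(G_n)$ from below using information available from $\mathrm{PD}_n$ alone and compare with the right-hand side using explicit prime-gap estimates. The bridge to item~(2) of the abstract is essentially tautological: a matching of size $(p_{n+1}-p_n)/2$ provides exactly the data required for one DPG growth step from $n$ to $n+1$ vertices, so DPG-graphicality and the structural conjecture coincide here.

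The crude greedy bound $\nu(G_n)\ge |E(G_n)|/(2\Delta(G_n))=(p_n-1)/(4M_n)$, where $M_n=\max_{\ell\le n}(p_\ell-p_{\ell-1})$, reduces the target to an inequality of the shape $M_n^2\lesssim p_n$. Unconditionally the best explicit bound $M_n\ll p_n^{0.525}$ falls short by a small polynomial factor, so one must refine. My approach would be to split the vertex set of $G_n$ into a bulk set $B$ of moderate degree (at most some threshold $T$, chosen so that almost all edges sit in $B$) and a small exceptional set $H$ arising from the rare large gaps, and then use $\nu(G_n[B])\ge|E(G_n[B])|/(2T)$ together with Berge's augmenting-path lemma to upgrade this into a near-maximum matching of $G_n$ itself. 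The first moment of large prime gaps, quantified by the explicit zero-free region and explicit zero-density estimate for $\zeta(s)$ that the abstract mentions, is precisely what guarantees $|E(G_n[B])|\sim |E(G_n)|$ while keeping $T$ small enough that $|E(G_n[B])|/(2T)$ dominates $(p_{n+1}-p_n)/2$.

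The main obstacle, in my view, is the universal quantifier over realizations: different graphs with the same degree sequence can have very different matching numbers, and a hostile realization might try to concentrate incidences around the high-degree vertices of $H$ to suppress $\nu(G_n)$. Overcoming this requires a structural constraint coming from $\mathrm{PD}_n$ itself---such as the abundance of small prime gaps forcing a linear number of low-degree vertices whose incident edges cannot all be absorbed into $H$---rather than from any particular realization, and this is where one likely needs a Gale--Ryser or Erd\H{o}s--Gallai style argument applied uniformly over realizations. Converting that combinatorial intuition into the effective doubly-exponential threshold announced in the abstract, and balancing the combinatorial loss from the augmenting-path step against the analytic savings from the large-gap first moment, is where I expect the bulk of the technical work to reside.
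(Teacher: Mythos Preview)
Your analytic plan---threshold the degree sequence at some $N$, control the first moment $\sum_{d_\ell\ge N} d_\ell$ of large prime gaps via the explicit zero-free region and zero-density estimate, and extract a matching from the low-degree part by a Vizing-type bound---is exactly the paper's strategy. Unwinding it yields precisely the inequality $N(p_{n+1}-p_n)+2\sum_{p_\ell-p_{\ell-1}\ge N}(p_\ell-p_{\ell-1})<p_n$ that the paper verifies with the choice $N\asymp p_n^{1/3}$, the first-moment bound of Theorem~\ref{thm:main-thm}, and Dudek's explicit $p_{n+1}-p_n\ll p_n^{2/3}$.

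Where you diverge is in the combinatorial packaging, and here you are overcomplicating. The paper invokes Lemma~\ref{lem:Erdos et al} (a consequence of Vizing's theorem): if $\delta d\le \sum_{d_i<\delta}d_i-\sum_{d_i\ge\delta}d_i$ then \emph{every} graph with degree sequence $(d_i)$ has a matching of size $d/2$. This is a purely degree-sequence criterion, so the ``universal quantifier over realizations'' you flag as the main obstacle simply evaporates. In your own language, the number of edges inside your bulk set $B$ obeys $|E(G_n[B])|\ge |E(G_n)|-\sum_{v\in H}d_v$ regardless of the realization, and combining this with $\nu(G_n)\ge\nu(G_n[B])\ge |E(G_n[B])|/(N+1)$ already reproduces the paper's inequality. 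The Berge augmenting-path step and the Gale--Ryser/Erd\H{o}s--Gallai machinery you anticipate are therefore unnecessary; no hostile realization can do worse than this degree-sequence bound.
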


        Notably,  Conjecture \ref{Conj2} implies Conjecture \ref{Conj1} (see the next subsection), linking the combinatorial properties of
        prime gap graphs to the distribution of primes.
        By merging tools from analytic number theory (to analyze prime gaps)
        and matching theory (to establish graph realizability and edge independence), Erd\H{o}s \emph{et al.} \cite{EHKMMT} proved these two conjectures hold for sufficiently large $n$ unconditionally, and for every $n$ assuming the Riemann Hypothesis (RH).

        This paper establishes an explicit lower bound \(n_0\) such that the result holds unconditionally for all \(n \geq n_0\).
        Our main theorem is:
        \begin{theorem}\label{MainThm}
               One may take $n_0= \exp\exp(30.32)$. That is, for all \(n \geq \exp\exp(30.32)\), the first \(n\) prime gap sequence $\mathrm{PD}_n$ is graphic.
        \end{theorem}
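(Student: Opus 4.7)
The plan is to verify, with every constant made explicit, an Erd\H{o}s--Gallai--type condition for the sorted prime gap sequence. First note that $\sum_{\ell=1}^{n}(p_\ell-p_{\ell-1})=p_n-1$ is even for every $n\geq 2$, so the parity requirement is automatic. Rearrange $\mathrm{PD}_n$ in non-increasing order as $d_1\geq d_2\geq\cdots\geq d_n$; by Erd\H{o}s--Gallai, graphicality is equivalent to
\[
\sum_{i=1}^{k}d_i\;\leq\;k(k-1)+\sum_{i=k+1}^{n}\min(d_i,k)\qquad(1\leq k\leq n).
\]
As the abstract foreshadows, I would invoke a refined form of this criterion (for instance the Durfee-type restriction to indices $k$ with $d_k\geq k$), so that the verification reduces to a short range $k=O((\log n)^2)$ corresponding to the largest prime gaps.

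For $T\geq 1$, set
\[
N(n,T)=\#\{\ell\leq n:p_\ell-p_{\ell-1}\geq T\},\qquad S(n,T)=\sum_{\substack{\ell\leq n\\ p_\ell-p_{\ell-1}\geq T}}(p_\ell-p_{\ell-1}).
\]
Then $\sum_{i=1}^{k}d_i=S(n,T_k)$ with $T_k$ the threshold for which $N(n,T_k)=k$, and $\sum_{i>k}\min(d_i,k)$ is likewise expressible through the level sets of the same distribution. The graphicality question thus reduces to proving explicit upper bounds on $S(n,T)$ uniformly in $T$, together with an explicit upper bound on $\max_{\ell\leq n}(p_\ell-p_{\ell-1})$ to handle the smallest values of $k$. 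The analytic engine is a first-moment estimate for large prime gaps, obtained by combining an explicit Korobov--Vinogradov-type zero-free region with an explicit zero-density estimate for $\zeta(s)$; fed through an Ingham / Heath--Brown style bound on $\psi(x+h)-\psi(x)$ on average, these should yield $S(n,T)\leq n(\log n)^{-A}$ for $T\geq(\log n)^{B}$ with fully traceable constants.

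Assembled together, by the refined criterion it suffices to verify the Erd\H{o}s--Gallai inequality for $k$ in the Durfee range $k=O((\log n)^2)$. Within that range, split into very small $k$, where the right-hand side is already enormous since most gaps are tiny and each contributes $k$ to $\sum_{i>k}\min(d_i,k)$, and intermediate $k$, where the explicit first-moment estimate directly controls the left-hand side. The principal obstacle is purely quantitative: the double-exponential threshold $\exp\exp(30.5)$ reflects that logarithmic slack factors accumulate through the chain zero-free region $\to$ zero-density estimate $\to$ moment bound $\to$ Erd\H{o}s--Gallai verification, and each step must be tuned so that its losses do not inflate $n_0$ beyond the target. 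Calibrating the parameter choices in the zero-density and moment-bound steps is the subtle part; the final combinatorial check is routine.
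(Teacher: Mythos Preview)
Your overall architecture --- verify Erd\H{o}s--Gallai via the refined criterion, reduce to explicit control of $S(n,T)=\sum_{p_\ell-p_{\ell-1}\geq T}(p_\ell-p_{\ell-1})$, and feed in an explicit first-moment bound coming from zero-free region plus zero-density --- is exactly the paper's. But two of your quantitative claims are off by a lot, and a key bridging idea is missing.

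First, the Durfee range is not $O((\log n)^2)$. From $d_m\geq m$ one only gets $m^2\leq\sum_{i\le m}d_i\leq p_n-1$, so $m\leq\sqrt{p_n}$; nothing better is known unconditionally (indeed $m=O((\log n)^2)$ would amount to a Cram\'er-strength statement). Second, and more seriously, a bound of the shape $S(n,T)\leq n(\log n)^{-A}$ for $T\geq(\log n)^{B}$ is not available from a Korobov--Vinogradov zero-free region plus a zero-density estimate: the explicit first-moment theorem the paper proves (Theorem~\ref{thm:main-thm}) requires the threshold to be a genuine power, $N\geq 8p_n^{1/4+\alpha}\log^3 p_n$, and yields only $S_N(p_n)\ll p_n^{1-c\,\eta(\cdot)}\log^4 p_n$ with $\eta$ the width of the zero-free region. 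Polylogarithmic thresholds are out of reach unconditionally.

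Because of this, your plan ``for intermediate $k$, the explicit first-moment estimate directly controls the left-hand side'' does not close: when $k\leq\sqrt{p_n}$ the associated threshold $d_k$ can be far below any power of $p_n$, so the first-moment bound says nothing about $S(n,d_k)$. The missing maneuver is an \emph{auxiliary threshold}. The paper splits at $k_N\gtrless\sqrt{p_n}$: for $k_N\geq\sqrt{p_n}+1$ the trivial bound $S_N(p_n)\leq p_n\leq (k_N-1)^2$ already wins; for $k_N\leq\sqrt{p_n}$ one introduces $N'=p_n^{1/2}/(3\log p_n)$ and writes
\[
S_N(p_n)=\sum_{N\leq p_{\ell+1}-p_\ell<N'}(p_{\ell+1}-p_\ell)+S_{N'}(p_n)\leq N'k_N+S_{N'}(p_n).
\]
The first piece is $\leq N'\sqrt{p_n}=p_n/(3\log p_n)<(n-1)/3$ by the condition $k_N\leq\sqrt{p_n}$ alone, and now $S_{N'}(p_n)$ \emph{does} fall in the regime where Theorem~\ref{thm:main-thm} applies (with $\alpha=0.249$), giving another $(n-1)/3$ for $n\geq\exp\exp(30.5)$. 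This two-scale decomposition is the step your outline lacks; without it the argument does not go through.
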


        While our results broadly follow the framework of~\cite{EHKMMT},
        achieving an optimal explicit bound \(n_0\) requires two critical refinements: a sharper criterion for graphicality of degree sequences (developed in Section \ref{sec:preliminaries}),
        and a refined treatment of zero-free regions and zero-density estimates for the Riemann zeta function (detailed in Section \ref{sec:first moment}).

        The main bottleneck is the known explicit zero-free region for the Riemann zeta function, which impacts the explicit bounds in prime gap estimates.
        
        \subsection{DPG-process}
        The \textit{Degree-Preserving Network Growth (DPG)} process is an algorithm to construct a sequence of graphs that iteratively extends a graph \( G_n \) with degree sequence
        \( D_n = (d_1, \ldots, d_n) \) to \( G_{n+1} \) with \( D_{n+1} = D_n\,\circ\,d_{n+1}:= (d_1, \ldots, d_n, d_{n+1})\),
        while preserving the degrees of the vertices of \(G_n\).
        Unlike static algorithms like the Havel--Hakimi algorithm,
        which recompute realizations from scratch, DPG modifies \( G_n \) through a local operation:

        \textbf{Degree Preserving-Step}: Insert a new vertex \( u \) with even degree \( d_{n+1} = 2\nu \).
        \begin{enumerate}
                \item Remove a matching of size \( \nu \) from \( G_n \)
                (i.e., a set of $\nu $ independent edges with no shared vertices).
                \item Connect \( u \) to all \( 2\nu \) endpoints of the removed edges.
        \end{enumerate}
      Existing vertices retain their degrees, as each loses one edge from the removed matching but gains a connection to \( u \).


{ The pair \( (G_n, d_{n+1}) \) is called \textit{DPG-graphic} if  \( G_n \) contains a matching of size \(  d_{n+1}/2 \). 
 By \cite[Theorem 2.4]{EMTZ24}, the sequence $D_n \circ d_{n+1}$ is graphic if and only if there exists a realization $G_n$ of $D_n$ such that the pair $(G_n,d_{n+1})$ is DPG-graphic. In order to construct an infinite DPG process,  one needs the requirement that every realization of $D_n$ contains a matching of size $d_{n+1}/2$.}
While this holds trivially for constant sequences (e.g., growing regular graphs),
general sequences require careful analysis of matchings in \( G_n \). {To achieve this, we will rely on Vizing’s theorem and its consequence stated in Lemma~\ref{lem:Erdos et al}.}

The DPG process avoids global restructuring and ensures degree invariance,
        but its applicability relies on the non-trivial existence of sufficiently large matchings
        in intermediate graphs.
        It is non-trivial to construct an infinite,
        naturally occurring sequence \( D \) whose every initial segment is graphic.

        In their novel work \cite{EHKMMT},
         Erd\H{o}s \emph{et al.} demonstrated for the first time that
        the sequence of the first \( n \) prime gap graphs constitutes a non-trivial,
        naturally arising infinite structure (for $n$ sufficiently large and for all $n$ if RH assumed).
        Specifically, they established two fundamental properties:
        i) All initial segments of this sequence are graphic;
        ii)  Every realization of these initial segments remains admissible for the DPG-algorithm.

        Our second result makes this explicit.
        \begin{theorem}\label{thm:DPG}
                Let \( n\geq \exp\exp(34.33)\). For any realization \( G_n \) of the prime gap sequence $\mathrm{PD}_n$, the pair \( \big( G_n,\, p_{n+1} - p_n \big) \) is DPG-graphic.
        \end{theorem}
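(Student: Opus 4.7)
The plan is to combine Vizing's theorem with a degree-truncation trick. For a threshold $y \geq p_{n+1}-p_n$ to be chosen, partition $V(G_n)$ into $V_L := \{v : \deg_{G_n}(v) > y\}$ and its complement $V_S$, and consider the induced subgraph $G_S := G_n[V_S]$. Since $\Delta(G_S) \leq y$, Vizing's theorem (packaged in Lemma~\ref{lem:Erdos et al}) supplies a matching of $G_S$ -- and hence of $G_n$ -- of size at least $|E(G_S)|/(y+1)$. Bounding the edges dropped in passing from $G_n$ to $G_S$ by the total degree of the removed vertices gives
\[
|E(G_S)| \;\geq\; |E(G_n)| - \sum_{v \in V_L} \deg_{G_n}(v) \;=\; \tfrac{p_n-1}{2} - S_L(y),
\]
where
\[
S_L(y) \;:=\; \sum_{\substack{1 \leq \ell \leq n \\ p_\ell - p_{\ell-1} > y}} (p_\ell - p_{\ell-1})
\]
is the first moment of the large prime gaps up to $p_n$, precisely the analytic quantity controlled in Section~\ref{sec:first moment}.

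The DPG-graphicality of $\bigl(G_n, p_{n+1}-p_n\bigr)$ therefore reduces to the explicit arithmetic inequality
\[
p_n - 1 - 2 S_L(y) \;\geq\; (y+1)\bigl(p_{n+1}-p_n\bigr),
\]
for some admissible $y$. I would choose $y$ slightly above the explicit upper bound on $p_{n+1}-p_n$ supplied by the short-interval prime estimate in Section~\ref{sec:first moment}, so that the right-hand side grows like a small power of $p_n$, while the zero-density-based first-moment estimate simultaneously forces $S_L(y) = o(p_n)$. The remaining inequality then becomes a numerical comparison of two explicit functions of $p_n$, which converts into a threshold on $n$ via Rosser--Schoenfeld-style explicit bounds for the $n$-th prime.

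The main obstacle is entirely quantitative. The Vizing bound, the induced-subgraph truncation, and the contour-integral estimation of both $p_{n+1}-p_n$ and $S_L(y)$ are standard in principle; the difficulty lies in carrying every constant through with a small enough numerical value. The extra factor $(y+1)$ in the displayed inequality, absent from the graphicality argument underlying Theorem~\ref{MainThm}, is precisely what drives the jump of the threshold from $\exp\exp(30.5)$ to $\exp\exp(34.5)$, and verifying the inequality at the boundary value $n = \lceil\exp\exp(34.5)\rceil$ will depend crucially on the sharpness of the explicit zero-free region and zero-density estimate for $\zeta(s)$ developed in Section~\ref{sec:first moment}.
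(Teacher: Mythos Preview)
Your overall framework coincides with the paper's: partitioning by a degree threshold, applying Vizing to the low-degree subgraph, and reducing to
\[
(y+1)(p_{n+1}-p_n) \;\leq\; p_n - 1 - 2S_L(y)
\]
is exactly the content of Lemma~\ref{lem:Erdos et al} (with $\delta=y+1$), and Section~\ref{sec:DPG-process} then bounds $p_{n+1}-p_n$ by Dudek's cube result and $S_L$ by Theorem~\ref{thm:main-thm}.

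The genuine gap is your choice of $y$. You propose $y$ ``slightly above the explicit upper bound on $p_{n+1}-p_n$'', i.e.\ $y\asymp p_n^{2/3}$. With that choice the right-hand side $(y+1)(p_{n+1}-p_n)$ can be of order $p_n^{4/3}$, which already exceeds $p_n$, so the displayed inequality fails outright no matter how sharply you control $S_L(y)$. (The constraint $y\ge p_{n+1}-p_n$ you impose at the outset is also spurious: nothing in the Vizing argument requires it.) The product structure forces the \emph{complementary} balance: since $p_{n+1}-p_n\lesssim 3p_n^{2/3}$, one must take the threshold of order $p_n^{1/3}$. The paper uses $N=\tfrac{1}{12}p_n^{1/3}$, giving $N(p_{n+1}-p_n)<\tfrac13 p_n$. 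This small threshold then forces $\alpha=1/13$ (to satisfy $N\ge 8p_n^{1/4+\alpha}\log^3 p_n$ in Theorem~\ref{thm:main-thm}), rather than $\alpha\approx 1/4$ as in the proof of Theorem~\ref{MainThm}; it is this drop in $\alpha$, and the correspondingly weaker power saving $p_n^{-\frac{8}{3}\alpha\eta(\cdot)}$, that drives the cutoff up to $\exp\exp(34.5)$.
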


\subsection{Notations.}
In what follows, we use the following notations.
The Chebyshev function is defined by
$$\psi(x)=\sum_{n\leq x}\Lambda(n),$$
where $\Lambda(n)$ is the von-Mangoldt function.
The notation $f=g+O^{*}(h)$ is equivalent to $|f-g|\leq h$.
For $1/2\leq\sigma<1$ and $T>0$, we let $N(\sigma,T)$ be the number
of non-trivial zeros $\rho=\beta+\text{i}t$ of the Riemann zeta function $\zeta(s)$
in the rectangle $\sigma<\beta<1$ and $0<t<T$.

\section{Preliminaries}\label{sec:preliminaries}
\subsection{Some explicit estimates}
Let $H_0\in\mathbb{R}$ be such that the {RH} is known to be true up to $H_0$. That is, all zeroes $\beta+\mathrm{i}\gamma$ of the Riemann zeta-function with $|\gamma|<H_0$ have $\beta=1/2$. Moreover, all of these zeroes are simple. Currently, we may take $H_0=3\times10^{12}$ (see {Platt and Trudgian\cite[Theorem 1]{PT}}).
For $3/5\leq \sigma \leq 1$ and $T> H_0$, we have the following zero-density estimate
(see Kadiri, Lumley and Ng \cite[Section 5, Table 1]{KLN})
\begin{align}\label{ZDE}
N(\sigma,T)
&\leq {2.177} (\log T)^{5-2\sigma} T^{\frac{8}{3}(1-\sigma)} + 5.663\log^2 T\\
&\leq {2.375} (\log T)^{5-2\sigma} T^{\frac{8}{3}(1-\sigma)}.\label{BZD}
\end{align}
{For \eqref{BZD}}, we used the fact that the logarithm contribution
in the first term of \eqref{ZDE} is at least $\log^3 T$
and that for $T>H_0$, \begin{align*}
\frac{5.663}{\log T}<{0.198}.
\end{align*}

By the zero-free region of Ford \cite[Theorem 5]{Ford}, we have that for $|t|>3$, every non-trivial zero $\beta+\mathrm{i}t$ of the Riemann zeta function satisfies 

\begin{align}\label{ZFR}
\beta \leq\, 1-\frac{c_0}{(\log t)^{\frac{2}{3}}(\log\log t)^{\frac{1}{3}}}:=1-\eta(t),
\end{align}
where $c_0 = 1/57.54$.
The constant $c_0$ has recently been improved,
such as in \cite{Bellotti2024,MTYang2024},
with the current best result being $c_0 = 1/51.34$
by Yang \cite[Theorem 1.3]{Yang25}.
We therefore proceed with $c_0 =1/ 51.34$ for our purpose.

Let $N(t)$ denote the number of zeros of $\zeta(s)$ in the critical strip to height $t$.
\begin{lemma}[{Cully-Hugill and Johnston \cite[Lemma 4.2]{CJ23}}]\label{lem:rect-bound}For all $t > 1$,
we have 
\[
N(t+1) - N(t-1) < \log t.
\]
\end{lemma}

\begin{lemma}[{Delange \cite{Delange}}] \label{lem:Delange_bound}
For any $\sigma>1$ and $t\in \mathbb{R}$ we have
\begin{equation*}
   \left| \frac{\zeta'(\sigma+\mathrm{i}t)}{\zeta(\sigma+\mathrm{i}t)} \right|<\frac{1}{\sigma-1}-\frac{1}{2\sigma^2}.
\end{equation*}
\end{lemma}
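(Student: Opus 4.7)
The plan is to reduce the complex inequality to its real analogue via a simple triangle inequality, and then to analyze $-\zeta'(\sigma)/\zeta(\sigma)$ on the real half-line $\sigma>1$. For the reduction I would invoke the Dirichlet-series identity
\[
-\frac{\zeta'(s)}{\zeta(s)} = \sum_{n=2}^{\infty} \frac{\Lambda(n)}{n^s},
\]
valid absolutely for $\Re s>1$ as a consequence of the Euler product. An application of the triangle inequality yields
\[
\left|\frac{\zeta'(\sigma+\mathrm{i}t)}{\zeta(\sigma+\mathrm{i}t)}\right| \leq \sum_{n=2}^{\infty} \frac{\Lambda(n)}{n^\sigma} = -\frac{\zeta'(\sigma)}{\zeta(\sigma)},
\]
reducing the lemma to the sharper real statement $-\zeta'(\sigma)/\zeta(\sigma) < 1/(\sigma-1) - 1/(2\sigma^2)$ for all $\sigma>1$.

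To handle the real inequality, I would introduce the entire function $F(\sigma)=(\sigma-1)\zeta(\sigma)$, normalized so that $F(1)=1$. The claim is then equivalent to $F'(\sigma)/F(\sigma)>1/(2\sigma^2)$, since $F'/F=1/(\sigma-1)+\zeta'/\zeta$. The two endpoint regimes are easy: the Laurent expansion $\zeta(\sigma)=(\sigma-1)^{-1}+\gamma+O(\sigma-1)$ gives $F'(1)/F(1)=\gamma>1/2$, and as $\sigma\to\infty$ the left-hand side behaves like $1/\sigma$ while the right-hand side is exponentially small, since $-\zeta'(\sigma)/\zeta(\sigma) = (\log 2)\,2^{-\sigma}+O(3^{-\sigma})$.

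The main obstacle is propagating the inequality \emph{uniformly} across $(1,\infty)$, especially for $\sigma$ slightly greater than $1$ where both sides are close to the common value $1/2$. My plan here would be to use the representation
\[
(\sigma-1)\zeta(\sigma) = \sigma - \sigma(\sigma-1)\int_1^\infty \frac{\{x\}}{x^{\sigma+1}}\,\mathrm{d}x
\]
and its logarithmic derivative to express $F'(\sigma)/F(\sigma)-1/(2\sigma^2)$ as an explicit integral whose sign can be analyzed via integration by parts with cancellation on unit intervals $[n,n+1]$. Likely one would split into $1<\sigma\leq\sigma_0$ (handled through the Laurent expansion of $\zeta$ with explicit Stieltjes-constant bounds) and $\sigma\geq\sigma_0$ (handled by the rapid decay of $-\zeta'(\sigma)/\zeta(\sigma)$ inherited from the Euler product). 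The whole argument, while elementary, is genuinely delicate because the inequality is nearly tight at $\sigma=1$, with the residual margin $\gamma-1/2\approx 0.077$ being the slack that must be preserved throughout the interpolation.
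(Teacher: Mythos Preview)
The paper does not prove this lemma at all; it is simply quoted from Delange's 1987 note and used as a black box. So there is no ``paper's proof'' to compare against---you are writing a proof where the authors chose to cite one.

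Your reduction to the real axis via the Dirichlet series $-\zeta'/\zeta(s)=\sum_{n\ge2}\Lambda(n)n^{-s}$ and the triangle inequality is correct and is indeed the first move; the strictness of the final inequality then comes entirely from the real-variable statement $-\zeta'(\sigma)/\zeta(\sigma)<1/(\sigma-1)-1/(2\sigma^2)$. Your reformulation as $F'(\sigma)/F(\sigma)>1/(2\sigma^2)$ with $F(\sigma)=(\sigma-1)\zeta(\sigma)$ is also correct, and the endpoint checks (value $\gamma>1/2$ at $\sigma=1$, and the large-$\sigma$ asymptotics) are fine---though note that in your sentence about $\sigma\to\infty$ you silently switched back from the $F'/F$ formulation to the original one, which is why ``the right-hand side is exponentially small'' reads oddly; in the $F'/F$ version the right-hand side is $1/(2\sigma^2)$, which is polynomial, not exponential.

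The genuine gap is that the entire content of Delange's result is the real-variable inequality, and you have not proved it---you have only proposed a strategy (``likely one would split into $1<\sigma\le\sigma_0$ and $\sigma\ge\sigma_0$'') without carrying it out. Checking the two endpoints does not by itself give the inequality on the whole interval, and the integral representation you wrote down has not been turned into an actual sign argument. If the goal is a self-contained proof, this step must be executed, not sketched; Delange's own note is short and would be the natural reference to consult for a clean argument.
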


\begin{lemma}[{Erd\H{o}s \emph{et al}. \cite[Lemma 2.4]{EHKMMT}}] \label{lem:dudek_bound}
Let $\sigma \le -1$ and $t \in \mathbb{R}$. Assume that either $\sigma \in 1 + 2\mathbb{Z}$ or $|t| \ge 1$. Then
\begin{equation*}
    \left| \frac{\zeta'(\sigma+\mathrm{i}t)}{\zeta(\sigma+\mathrm{i}t)} \right| < 9 + \log|\sigma+\mathrm{i}t|.
\end{equation*}
\end{lemma}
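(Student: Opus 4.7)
The plan is to exploit the functional equation of the Riemann zeta function to transfer the estimate from the half-plane $\operatorname{Re}(s)\le -1$, where we have little direct control, into the half-plane $\operatorname{Re}(1-s)\ge 2$, where Lemma~\ref{lem:Delange_bound} applies. Write $\chi(s)=2^s\pi^{s-1}\sin(\pi s/2)\Gamma(1-s)$ so that $\zeta(s)=\chi(s)\zeta(1-s)$, and take logarithmic derivatives to obtain
\[
\frac{\zeta'(s)}{\zeta(s)} \;=\; \frac{\chi'(s)}{\chi(s)} \;-\; \frac{\zeta'(1-s)}{\zeta(1-s)}
\;=\; \log(2\pi) + \frac{\pi}{2}\cot\!\Big(\frac{\pi s}{2}\Big) - \frac{\Gamma'(1-s)}{\Gamma(1-s)} - \frac{\zeta'(1-s)}{\zeta(1-s)}.
\]
The last term is harmless: since $\operatorname{Re}(1-s)\ge 2$, Lemma~\ref{lem:Delange_bound} yields $|\zeta'(1-s)/\zeta(1-s)|<1$. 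So the work reduces to bounding the cotangent and digamma contributions in terms of $\log|\sigma+\mathrm{i}t|$ plus an explicit constant less than about $8$.

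Next I would bound $\cot(\pi s/2)$, which is the only place where the two branches of the hypothesis matter; these two cases are precisely what keeps us bounded away from the poles $s\in 2\mathbb{Z}$ of the cotangent. Using
\[
|\sin(\pi s/2)|^2 \;=\; \sinh^2(\pi t/2)+\sin^2(\pi\sigma/2),\qquad |\cos(\pi s/2)|^2 \;=\; \sinh^2(\pi t/2)+\cos^2(\pi\sigma/2),
\]
one sees that if $|t|\ge 1$ then $|\cot(\pi s/2)|\le \coth(\pi/2)<1.1$, while if $\sigma$ is an odd integer then $\cos(\pi\sigma/2)=0$ and $\cot(\pi s/2)=-\mathrm{i}\tanh(\pi t/2)$ has modulus at most $1$. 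In either case $\tfrac{\pi}{2}|\cot(\pi s/2)|\le \tfrac{\pi}{2}\coth(\pi/2)$, a harmless absolute constant.

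For the digamma, I would invoke Stirling in the form $\Gamma'(w)/\Gamma(w)=\log w+O^{*}(C/|w|)$ valid in the half-plane $\operatorname{Re}(w)\ge 2$, which we may apply with $w=1-s$ since $\operatorname{Re}(1-s)\ge 2$. This gives $|\Gamma'(1-s)/\Gamma(1-s)|\le \log|1-s|+C_1$ for an explicit $C_1$, and the elementary inequality $|1-s|\le 1+|s|\le 2|s|$ (since $|s|\ge 1$ when $\sigma\le -1$) converts this to $\log|s|+\log 2+C_1$.

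Combining the four pieces gives
\[
\Big|\frac{\zeta'(s)}{\zeta(s)}\Big| \;\le\; \log|s| + \Big(\log(2\pi)+\log 2+\tfrac{\pi}{2}\coth(\pi/2)+C_1+1\Big),
\]
and the main obstacle is numerical: tracking each constant carefully (the Stirling remainder $C_1$ in particular, for which one wants an explicit bound valid on $\operatorname{Re}(w)\ge 2$) so that the total additive constant fits under $9$. This is routine but requires care; in the Erd\H{o}s \emph{et al.} presentation it is done by choosing the Stirling estimate with a clean explicit remainder and verifying the inequality $\log(2\pi)+\log 2+\tfrac{\pi}{2}\coth(\pi/2)+C_1+1<9$ by direct numerical check.
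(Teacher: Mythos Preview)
The paper does not supply its own proof of this lemma; it is quoted verbatim from \cite[Lemma~2.4]{EHKMMT}. Your sketch is the standard route and is essentially what the cited source does: pass through the functional equation to land in $\operatorname{Re}(1-s)\ge 2$, invoke Lemma~\ref{lem:Delange_bound} there, and bound $\chi'/\chi$ by handling the cotangent (using the hypothesis to stay away from the poles at $s\in 2\mathbb{Z}$) and the digamma (via Stirling) separately. The argument is correct, and the numerical margin is generous: since $\log(2\pi)+\log 2+\tfrac{\pi}{2}\coth(\pi/2)+1\approx 5.25$, any Stirling remainder $C_1<3.75$ suffices, and for $\operatorname{Re}(w)\ge 2$ one easily has $|\psi(w)-\log w|<1$.
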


\begin{lemma}[{Dudek \cite[Lemma 2.8]{Dudek}}] \label{lem:help shift contour}
Let $\sigma > -1$ and $t > 50$. Then there exists $t_0 \in (t - 1, t + 1)$ such that for every $\sigma > -1$ we have
\[\left| \frac{\zeta'(\sigma +\mathrm{i}t_0)}{\zeta(\sigma +\mathrm{i}t_0)} \right|< \log^2 t + 20 \log t.\]
\end{lemma}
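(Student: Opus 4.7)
The plan is to start from the truncated partial-fraction identity
\begin{equation*}
\frac{\zeta'(s)}{\zeta(s)} = \sum_{\substack{\rho = \beta + \mathrm{i}\gamma \\ |t - \gamma| < 1}} \frac{1}{s - \rho} + O(\log t),
\end{equation*}
valid for $-1 < \sigma \le 2$ and $|t| > 2$, with the implied constant absolute and independent of $\sigma$. This is the classical consequence of logarithmically differentiating the Hadamard product for $\xi(s) = \tfrac{1}{2} s(s-1)\pi^{-s/2}\Gamma(s/2)\zeta(s)$, using Stirling to handle $\Gamma'(s/2)/\Gamma(s/2) = \tfrac{1}{2}\log t + O(1)$, and absorbing the tail $\sum_{|t-\gamma| \ge 1}(s-\rho)^{-1}$ into the error via a dyadic application of Lemma \ref{lem:rect-bound}. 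The regime $\sigma > 2$ is far easier and follows directly from Lemma \ref{lem:Delange_bound}, so the substantive case is $\sigma \in (-1, 2]$.

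Granting the identity, the lemma reduces to choosing a single $t_0 \in (t-1, t+1)$ (the same $t_0$ for every $\sigma$) that keeps each denominator $|s - \rho|$ safely bounded below. Let $\gamma_1 < \cdots < \gamma_M$ denote the ordinates of the non-trivial zeros with $|\gamma_j - t| < 1$. By Lemma \ref{lem:rect-bound}, $M < \log t$. These $M$ points split $(t-1, t+1)$ into $M+1$ open subintervals of total length $2$, so some subinterval has length at least $2/(M+1)$; I take $t_0$ to be its midpoint. Then
\begin{equation*}
|t_0 - \gamma_j| \ge \frac{1}{M+1} > \frac{1}{\log t + 1} \qquad (j = 1, \ldots, M).
\end{equation*}

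Since $|s - \rho_j| \ge |t_0 - \gamma_j|$, the truncated sum is bounded by $M(\log t + 1) < \log^2 t + \log t$, and substituting back into the identity yields $|\zeta'/\zeta(s)| < \log^2 t + \log t + O(\log t)$, which has the shape of the claim.

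The main obstacle is not the combinatorics but making the residual $O(\log t)$ explicit, so that the total coefficient of $\log t$ comes in below $20$. This forces one to track constants in (i) Stirling's approximation for $\Gamma'(s/2)/\Gamma(s/2)$, (ii) the additive terms $\tfrac{1}{2}\log\pi$ and the simple poles at $s = 0,1$ of the completed $\xi$, (iii) the Hadamard additive constant, and (iv) the dyadic tail estimate $\sum_{|t-\gamma|\ge 1}|s-\rho|^{-1}$ via Lemma \ref{lem:rect-bound}. Since $t > 50$ gives $\log t > 3.9$ there is ample slack, and a careful round of bookkeeping should yield an effective constant comfortably below $20$, which is the content of the quoted statement.
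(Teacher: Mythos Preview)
The paper does not give its own proof of this lemma; it simply quotes Dudek \cite[Lemma~2.8]{Dudek}. Your sketch is the standard argument and is essentially what Dudek does: local partial-fraction expansion of $\zeta'/\zeta$, a pigeonhole choice of $t_0$ away from the nearby ordinates, and explicit bookkeeping of the $O(\log t)$ remainder.

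One small point to tighten. After you fix $t_0\in(t-1,t+1)$, the partial-fraction identity at $s=\sigma+\mathrm{i}t_0$ runs over zeros with $|\gamma-t_0|<1$, hence over $\gamma\in(t-2,t+2)$, not just $\gamma\in(t-1,t+1)$. Your pigeonhole only keeps $t_0$ away from the zeros with $|\gamma-t|<1$, so a zero with $|\gamma-t|\in[1,2)$ could still sit arbitrarily close to $t_0$ and blow up a term of the sum. The fix is routine: run the pigeonhole against all zeros with $|\gamma-t|<2$ (at most $\log(t+1)+\log(t-1)<2\log t$ of them by two applications of Lemma~\ref{lem:rect-bound}), which still gives $|t_0-\gamma_j|\gg 1/\log t$ and only perturbs the constants, leaving the shape $\log^2 t+O(\log t)$ intact.
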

\begin{remark}This lemma allows us to shift the contour slightly to a line where we have good bounds, if our contour  is somewhat close to a zero.
\end{remark}

Next we establish an explicit formula for the Chebyshev function $\psi(x)$.
\begin{lemma} \label{lem:explicit_formula_real_x}
For any $x\in\mathbb{R}\backslash\mathbb{Z}$ with $x > 10^{18}$ and any $T\in\mathbb{R}$ satisfying $50 < T < x-1$, we have
\begin{equation*}
\psi(x) = x - \sum_{|\Im\rho|<T} \frac{x^\rho}{\rho} +
O^*\left( 4.6 \frac{x \log x\log\log x}{T} \right),
\end{equation*}
where the sum is over the non-trivial zeros of $\zeta(s)$.
\end{lemma}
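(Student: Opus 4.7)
The plan is to prove this via the classical Perron formula plus contour-shift approach, tracking every constant explicitly.

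First I would apply the truncated Perron formula to the Dirichlet series $-\zeta'(s)/\zeta(s) = \sum_{n\ge 1} \Lambda(n) n^{-s}$ on the vertical line $\Re s = c := 1 + 1/\log x$, obtaining
\[
\psi(x) = -\frac{1}{2\pi\mathrm{i}} \int_{c-\mathrm{i}T}^{c+\mathrm{i}T} \frac{\zeta'(s)}{\zeta(s)} \frac{x^s}{s}\, \dd s + R_1(x,T),
\]
where the truncation error $R_1$ is the usual $\sum_n \Lambda(n)(x/n)^c \min\!\bigl(1, 1/(T|\log(x/n)|)\bigr)$. Splitting this sum according to the size of $|\log(x/n)|$, bounding the bulk via Lemma~\ref{lem:Delange_bound}, and handling the near-diagonal terms $|n-x| \le x/T$ by a Brun--Titchmarsh-type count of prime powers in short intervals (each weighted by $\Lambda(n) \le \log x$), one obtains an explicit bound of the shape $|R_1(x,T)| \ll x(\log x)(\log\log x)/T$.

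Next I would shift the contour to the left. By Lemma~\ref{lem:help shift contour} I pick $T_0 \in (T-1, T+1)$ with $|\zeta'/\zeta(\sigma \pm \mathrm{i}T_0)| < \log^2 T + 20\log T$ for all $\sigma > -1$; the price of replacing $T$ by $T_0$ inside the zero-sum is at most $\log T$ extra zeros by Lemma~\ref{lem:rect-bound}, each contributing $O(x/T)$ and thus absorbable into the main error. Shifting from $\Re s = c$ to $\Re s = -U$ (with $U \to \infty$) and applying the residue theorem yields the residue $x$ at $s=1$, the residues $-x^\rho/\rho$ at the non-trivial zeros with $|\Im\rho|<T_0$, and the $O(1)$ contributions $-\log(2\pi) + \tfrac{1}{2}\log(1-x^{-2})$ coming from $s=0$ and the trivial zeros, both absorbable since $x > 10^{18}$.

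The horizontal segments at height $\pm T_0$ I would split at $\sigma = -1$: for $-1 \le \sigma \le c$ Lemma~\ref{lem:help shift contour} bounds the integrand and gives a contribution $\ll x(\log^2 T)/(T\log x)$, swallowed into the main error because $T < x-1$ forces $\log T < \log x$; for $\sigma \le -1$ Lemma~\ref{lem:dudek_bound} makes the contribution $O(1)$. The leftward vertical on $\Re s = -U$ vanishes as $U \to \infty$, once again by Lemma~\ref{lem:dudek_bound}. The hardest part will be calibrating the explicit constant $4.6$: every contribution---the Perron error, the $T_0$-correction, the horizontal integrals, and the lower-order $O(1)$ residues---must be estimated with an explicit constant, and these constants summed while using the hypotheses $x > 10^{18}$ and $T > 50$ to ensure that each lower-order term is dominated by the final $4.6\,x(\log x)(\log\log x)/T$. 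The $\log\log x$ factor is not ornamental: it enters genuinely from the short-interval prime-power count in the Perron error and cannot be removed without a different method.
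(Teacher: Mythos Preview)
Your outline coincides with the paper's argument: Perron on $\Re s=1+1/\log x$, passage to a good height $T_0$ via Lemma~\ref{lem:help shift contour}, residue theorem pushed to $\Re s\to-\infty$, horizontal bounds from Lemmas~\ref{lem:dudek_bound} and~\ref{lem:help shift contour}, and readjustment of the zero-sum from $T_0$ to $T$ using Lemma~\ref{lem:rect-bound}. The one place where your sketch is looser than the paper is the Perron remainder: a two-way split at $|n-x|=x/T$ with Delange handling the ``bulk'' leaves an intermediate range where $1/|\log(x/n)|$ is still large and a naive estimate costs an extra $\log T$; the paper instead uses a four-way split at $|n-x|=1,\ \log x,\ x/5$ (invoking the estimates of \cite{EHKMMT} for the two inner ranges), and it is the harmonic-type sums $\sum_{\Lambda(n)\ne 0}1/|x-n|$ over those inner ranges that produce the genuine $\log\log x$ and allow the constant to be pushed down to $4.6$.
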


\begin{proof}
This proof follows the framework laid out in Section 6 of \cite{EHKMMT} with some necessary modifications.
We start with the truncated Perron's formula, as presented in \cite[Eq.\,(19)]{EHKMMT}.
To avoid the potential issue that  $T$ might lie close to the ordinate of a zero,
we choose some $ T_0 \in (T - 1, T + 1)$ according to Lemma \ref{lem:help shift contour}.
For a non-integer $x > 2$ and $c=1+1/\log x$, we have
\begin{equation*} \label{eq:our_perron_psi}
\psi(x) = \frac{1}{2\pi \mathrm{i}} \int_{c-\mathrm{i} T_0}^{{c+\mathrm{i} T_0}} \left(-\frac{\zeta'(s)}{\zeta(s)}\right)\frac{x^s}{s}\mathrm{d}s + O^*\left(\sum_{n=1}^\infty \Lambda(n) \left(\frac{x}{n}\right)^c \min\left(0.501, \frac{1}{\pi T_0 \left|\log\frac{x}{n}\right|}\right)\right).
\end{equation*}
We estimate this  error term by splitting the sum over $n$ into four parts:
$S_1$ for $1 \le |n-x| \le \log x$, $S_2$ for $\log x < |n-x| \le x/5$, $S_3$ for $|n-x| > x/5$, and $S_4$ for $|n-x|<1$.

For $S_1$, we use the second term in the $\min$ function. By \cite[Eq.\,(23)]{EHKMMT}, we have
\begin{equation*}
    S_1\le 1.001 \frac{x \log x}{\pi T_0}
\sum_{\substack{1 \le |x - n| \le \log x \\ \Lambda(n) \ne 0}}
\frac{1}{|x - n|}
< 0.638\frac{x\log x(\log\log x+0.6)}{T_0}.
\end{equation*}
For $S_2$, by \cite[Eqs.\,(24)--(26)]{EHKMMT}, we have
\begin{equation*}
S_2\le \frac{1}{4 \log \tfrac{5}{4}} \cdot
\frac{x \log x}{\pi T_0}
\sum_{\substack{\log x < |x - n| \le x \\ \Lambda(n) \ne 0}}
\frac{1}{|x - n|}<1.427\frac{x\log x(\log\log x-0.4)}{T_0}.
\end{equation*}
For $S_3$, $|\log(x/n)| > \log(6/5)$ is a constant. {Using Lemma \ref{lem:Delange_bound}, the contribution is bounded by}
\begin{equation*}
    S_3 \le \frac{1}{\pi T_0 \log(6/5)} {\sum_{|n-x|>x/5}}\Lambda(n)\left(\frac{x}{n}\right)^c
    < \frac{ex}{\pi T_0 \log(6/5)}\left|-\frac{\zeta'(c)}{\zeta(c)}\right|<4.746 \frac{x \log x}{T_0}.
\end{equation*}
Finally for $S_4$, there are two $n$'s satisfying $|x - n| < 1$.
We use the first term in the $\min$ function, and by \cite[Eq.\,(20)]{EHKMMT}, the contribution is
\[
S_4\le 0.501 \left( \frac{x}{x - 1} + \frac{x}{x} \right) \log x < 1.003 \log x<1.003\frac{x\log x}{T_0}.
\]
So we get
\begin{equation}\label{eq:psi_before Cauchy}
    \psi(x) = \frac{1}{2\pi \mathrm{i}} \int_{c-\mathrm{i} T_0}^{{c+\mathrm{i} T_0}}
    \left(-\frac{\zeta'(s)}{\zeta(s)}\right)\frac{x^s}{s}\mathrm{d}s +
    O^*\left( {2.065} \frac{x \log x\log\log x}{T_0}+{5.561 }\frac{x\log x}{T_0} \right).
\end{equation}

The integral here is evaluated by the residue theorem, yielding
\begin{align*}
    \frac{1}{2\pi \mathrm{i}}& \left( \int_{c-\mathrm{i}T_0}^{c+\mathrm{i}T_0}  +
    \int_{c+\mathrm{i}T_0}^{-\infty+\mathrm{i}T_0}
    +\int_{-\infty+\mathrm{i}T_0}^{-\infty-\mathrm{i}T_0}
    + \int_{-\infty-\mathrm{i}T_0}^{c-\mathrm{i}T_0} \right)
    \left(-\frac{\zeta'(s)}{\zeta(s)}\right)\frac{x^s}{s}\mathrm{d}s\\
    &= x - \sum_{|\Im\rho|<T_0} \frac{x^\rho}{\rho} - \log(2\pi) - \frac{1}{2}\log(1-x^{-2}).
\end{align*}
By Lemma \ref{lem:dudek_bound}, the contribution of the third integral is
\[
\lim_{U\to\infty}\frac{1}{2\pi}
    \left| \int_{-U-\mathrm{i}T_0}^{-U+\mathrm{i}T_0} -\frac{\zeta'(s)}{\zeta(s)} \frac{x^s}{s} \mathrm{d}s \right|
    \le {\lim_{U\to\infty}\int_{-T_0}^{T_0} \frac{9 + \log \sqrt{U^2 + T_0^2}}{2\pi x^U \sqrt{U^2+t^2}} \mathrm{d}t}
    =0.
\]
By Lemma \ref{lem:dudek_bound} and Lemma \ref{lem:help shift contour}, the contribution of the second integral is
\[
 \frac{1}{2\pi } \left| \int_{c+\mathrm{i}T_0}^{-\infty+\mathrm{i}T_0} \left(-\frac{\zeta'(s)}{\zeta(s)}\right)\frac{x^s}{s}\mathrm{d}s\right|
 \le \frac{{\log^2T+20\log T}}{2\pi T_0}\int_{-\infty}^cx^{\sigma}\mathrm{d}\sigma\le\frac{ex(\log x+20)}{2\pi T_0}.
\]
The fourth integral obeys the same bound. Plugging all these into \eqref{eq:psi_before Cauchy} we have
\begin{equation}
\begin{aligned}\label{eq:psi_explicit error}
    \psi(x)=x&-\sum_{|\Im\rho|<T_0} \frac{x^\rho}{\rho} - \log(2\pi) - \frac{1}{2}\log(1-x^{-2})+O^*\left(\frac{ex(\log x+20)}{\pi T_0} \right)\\
    &+O^*\left( 2.065 \frac{x \log x\log\log x}{T_0}+{ 5.561}\frac{x\log x}{T_0} \right).
\end{aligned}
\end{equation}
By Lemma \ref{lem:rect-bound}, we adjust the $\rho$-sum to $|\Im\rho| < T$ at the cost of an error of
\begin{equation}\label{eq:cost T_T0}
\left|\sum_{|\Im\rho-T|\le 1}\frac{x^{\rho}}{\rho}\right|\le\sum_{|\Im\rho-T|\le 1}\frac{x}{T-1}\le\frac{2x\log T}{T-1}.
\end{equation}
Note that for $x>10^{18},\,T>50$, we have
$$1<0.269\log\log x,\quad 1<0.007\log x\log\log x, \quad\text{and}\ \frac{1}{T-1}\le1.021\frac{1}{T}. $$
Combining \eqref{eq:psi_explicit error} and \eqref{eq:cost T_T0}  we have, for a non-integer $x$,
\begin{equation*}
\psi(x) = x - \sum_{|\Im\rho|<T} \frac{x^\rho}{\rho} + O^*\left(  4.6 \frac{x \log x\log\log x}{T} \right).
\end{equation*}

This completes the proof of this lemma.
\end{proof}
    
\begin{remark}
Note that Lemma \ref{lem:explicit_formula_real_x} gives an explicit version of a result due to Goldston \cite[Eq.\,(4)]{Goldston}; see also another explicit version in Cully-Hugill \cite[Theorem 2]{Cully23} for half odd integers.
\end{remark}

Let $\pi(x)$ be the prime-counting function defined to be the number of primes less than or equal to $x$. 
\begin{lemma}[Dusart\cite{Dusart1998}]\label{lem:Dusart-bound}
        For all  $x \geq 599$, we have
        $$
       \pi(x)\ge \frac{x}{\log x}\left(1+\frac{1}{\log x}\right). 
        $$
\end{lemma}

        \subsection{Criteria for sequences being graphic}\label{sec:graph-criteria}
        To address this problem, we draw upon some
knowledge from graph theory. Many results provide necessary and sufficient conditions for a
nonnegative sequence to be graphic.
        One classic and well-known theorem is the
        Erd\H{o}s--Gallai criterion~\cite{EG}:
        Given \( \left(d_i\right)_{i=1}^n \in \mathbb{N}^n \)
        such that \( d_1 \geq d_2 \geq \cdots \geq d_n \geq 0 \)
        and the sum of all terms is even,
        then there exists a (simple undirected) graph with \(n\) vertices
        and degrees \(d_1, \dots, d_n\) if and only if
        \[
        \sum_{i=1}^{k} d_i \leq k(k - 1) + \sum_{i=k+1}^{n} \min(k, d_i)
        \]
        for all \(k \in \{1, \dots, {n-1}\}\).

        In fact, the results of Zverovich and Zverovich \cite{ZZ92}, and later Tripathi and Vijay \cite{TV}, indicate that it is unnecessary to verify the above inequality for all values of \(1\leq  k\leq n \).
        \begin{theorem}\label{thm:ZZTV}
                Let \( \left(d_i\right)_{i=1}^n \in \mathbb{N}^n \) be a nonincreasing sequence with \( d_1 \geq d_2 \geq \cdots \geq d_n \geq 0 \). Then \( \left(d_i\right)_{i=1}^n\)  is graphic if and only if $\sum_{i=1}^n d_i$ is even and
                \[
                \sum_{i=1}^{k} d_i \leq k(k - 1) + \sum_{i=k+1}^{n} \min(k, d_i)
                \]
                holds for \(k = m\) and for all \( k < m \) such that \( d_k > d_{k+1} \), where \( m = \max \{i : d_i \geq i\} \).
        \end{theorem}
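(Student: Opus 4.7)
The plan is to derive Theorem~\ref{thm:ZZTV} from the classical Erd\H{o}s--Gallai theorem, which gives the same equivalence but requires the inequality
$$
\sum_{i=1}^{k}d_i\leq k(k-1)+\sum_{i=k+1}^n\min(k,d_i)
$$
at \emph{every} $k\in\{1,\ldots,n-1\}$. The necessity direction ($\Rightarrow$) is then immediate, since the hypothesis of Theorem~\ref{thm:ZZTV} is weaker than the full Erd\H{o}s--Gallai condition. For the sufficiency direction ($\Leftarrow$), the task is to show that the inequalities at the selected indices $k\in\{m\}\cup\{k<m:d_k>d_{k+1}\}$ force the inequality at every remaining $k\in\{1,\ldots,n-1\}$.

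Define the slack
$$
f(k):=k(k-1)+\sum_{i=k+1}^n\min(k,d_i)-\sum_{i=1}^k d_i,
$$
so that the Erd\H{o}s--Gallai inequality at $k$ reads $f(k)\geq 0$. A direct computation gives the discrete derivative
$$
f(k+1)-f(k)=2k-\min(k,d_{k+1})-d_{k+1}+\#\{i\geq k+2:d_i\geq k+1\}.
$$
I would split the analysis into two regimes, according to whether $k\geq m$ or $k<m$.

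For $k\geq m$, the maximality of $m$ gives $d_{k+1}\leq k$, so $\min(k,d_{k+1})=d_{k+1}$ and the derivative simplifies to $2(k-d_{k+1})+\#\{i\geq k+2:d_i\geq k+1\}\geq 0$. Hence $f$ is non-decreasing on $\{m,\ldots,n-1\}$, and the hypothesized $f(m)\geq 0$ propagates to all $k\geq m$. For $k<m$, one has $d_{k+1}\geq d_m\geq m>k$, which forces $\min(k,d_{k+1})=k$; together with $\#\{i\leq k+1:d_i\geq k+1\}=k+1$ coming from the nonincreasing nature of $(d_i)$, the derivative becomes $f(k+1)-f(k)=T_k-d_{k+1}-1$ with $T_k:=\#\{i:d_i\geq k+1\}$. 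I would traverse $\{1,\ldots,m\}$ right-to-left in blocks of constant degree: inside a block $[j,j']$ of value $d$, the sign of $T_k-d-1$ is controlled by the partition structure of $(d_i)$, and the right endpoint $j'$ is either $m$ itself or a strict-decrease index, so $f(j')\geq 0$ is hypothesized; the goal is to transfer this non-negativity backwards to every $j\leq k\leq j'$, while boundary crossings between blocks land exactly on strict-decrease indices, where $f\geq 0$ is again hypothesized.

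The main obstacle I expect is this lower range step. The upper range is a clean monotonicity argument, but in the lower range the sign of $T_k-d_{k+1}-1$ is \emph{a priori} unconstrained, and verifying that the slack $f$ does not dip below zero inside a constant-degree block requires a careful combinatorial comparison between the conjugate-partition quantity $T_k$ and the block value $d$. The correct direction of monotonicity should follow from the Young-diagram fact that for $k<m$ the high-degree blocks above level $k+1$ contribute at least $k+1$ to $T_k$, which is what keeps the discrete derivative compatible with propagating $f(j')\geq 0$ back across the block.
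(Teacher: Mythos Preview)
The paper does not supply its own proof of Theorem~\ref{thm:ZZTV}; it quotes the result with attribution to Zverovich--Zverovich \cite{ZZ92} and Tripathi--Vijay \cite{TV}, so there is no in-paper argument to compare against.

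On the substance of your attempt: the slack function $f$, the discrete-derivative formula, and the monotonicity argument for $k\geq m$ are all correct. The gap you flag in the lower range is real, but your proposed fix---propagating $f(j')\geq 0$ \emph{backwards} across a constant-degree block via monotonicity of $f$---does not work. Take $d_1=\cdots=d_{10}=5$: here $m=5$, the only hypothesized index is $k=5$, and one computes $f(0),f(1),\ldots,f(5)=0,4,8,12,16,20$; the slack is strictly \emph{increasing} on $[0,5]$, so nothing can be pushed right-to-left from $f(5)$ alone. What actually closes the argument is a concavity observation: within a block of constant value $d$ (including its left neighbour $j-1$), the increment $f(k+1)-f(k)=T_k-d-1$ is nonincreasing in $k$ because $T_k=\#\{i:d_i\geq k+1\}$ is. Hence $f$ is concave on $[\,j-1,\ \min(j',m)\,]$, and the two hypothesized endpoint values $f(j-1)\geq 0$ (taking $f(0)=0$ for the leftmost block) and $f(\min(j',m))\geq 0$ force $f\geq 0$ across the whole block. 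Your Young-diagram remark that $T_k\geq k+1$ is a \emph{lower} bound on the increment, which points the wrong way; it is the monotonicity of $T_k$, not its size, that is needed.
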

        This result significantly reduces the number of \( k \)-values that need to be checked 
        in the original Erd\H{o}s--Gallai criterion, especially when \( n \) is sufficiently large. 
        This reduction plays an important role in our setting.

\subsection{Application of Vizing's theorem}
A well-known theorem by Vizing~\cite{Viz64} states that a simple graph with maximal degree $\Delta$ 
admits a proper edge coloring with $\Delta + 1$ colors. This leads to the following lemma, 
which establishes a condition for the existence of a large matching.

        \begin{lemma}[{Erd\H{o}s \emph{et al}.\cite[Lemma 2.1]{EHKMMT}}]\label{lem:Erdos et al}
                Let $G$ be a simple graph on $n$ vertices with degrees $d_1, \dots, d_n$. Let $\delta \geq 1$ be an integer, and let $d \geq 2$ be an even integer satisfying
                \[
                \delta d \leq \sum_{d_i < \delta} d_i - \sum_{d_i \geq \delta} d_i.
                \]
                Then $G$ has a matching of size $d/2$.
        \end{lemma}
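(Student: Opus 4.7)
The plan is to prove this purely as a consequence of Vizing's theorem applied to the induced subgraph on the low-degree vertices, with a standard double-counting identity to convert the hypothesis into an edge count. First I would partition the vertex set of $G$ as $L=\{v:d_v<\delta\}$ and $H=\{v:d_v\geq\delta\}$, and set $G_L=G[L]$. Every vertex of $G_L$ has degree at most $\delta-1$ in $G_L$, so by Vizing's theorem $\chi'(G_L)\leq \Delta(G_L)+1\leq \delta$; in particular $G_L$ admits a proper edge coloring with at most $\delta$ colors, and by pigeonhole some color class (which is a matching in $G$) has size at least $|E(G_L)|/\delta$.

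The remaining task is to bound $|E(G_L)|$ from below. I would classify the edges of $G$ as $LL$, $LH$, or $HH$ according to which side their endpoints lie on, and observe the identity
\[
\sum_{v\in L} d_v \;-\; \sum_{v\in H} d_v \;=\; 2|E(G_L)| \;-\; 2|E(G[H])|,
\]
since the $LH$ edges contribute the same amount to both degree sums and cancel. The hypothesis $\delta d \leq \sum_{d_i<\delta} d_i - \sum_{d_i\geq\delta} d_i$ then gives $2|E(G_L)|\geq \delta d+2|E(G[H])|\geq \delta d$, so $|E(G_L)|\geq \delta d/2$. Dividing by $\delta$, the matching guaranteed by Vizing has at least $d/2$ edges, which is an integer because $d$ is even.

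There is no real obstacle here: the shape of the hypothesis, a difference of two degree-sums split at the threshold $\delta$, strongly suggests the partition into low- and high-degree vertices, and once $G_L$ is isolated the bound $\Delta(G_L)\leq \delta-1$ is immediate, making Vizing's theorem applicable. The only slightly subtle point is that the $LH$ edges cancel in the degree-sum identity, which is precisely what causes the hypothesis to align with $|E(G_L)|$, the quantity controlling the matching size.
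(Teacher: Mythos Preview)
The paper does not supply its own proof of this lemma; it merely quotes the statement from \cite[Lemma~2.1]{EHKMMT} and remarks in the surrounding text that it is a consequence of Vizing's theorem. Your argument is correct and is exactly the Vizing-based route the paper alludes to: restrict to the low-degree side, bound its maximum degree by $\delta-1$, edge-color with $\delta$ colors, and use the degree-sum identity to show $|E(G_L)|\ge \delta d/2$.
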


        \section{First moment of large prime gaps}\label{sec:first moment}
        Let \( N = 4\delta x \) throughout this section. For simplicity, we define the integral
        \[\mathcal{I}=\mathcal{I}(x,T):=\int_x^{2x} \left| \sum_{|\Im \rho| < T} y^{\rho} C(\rho) \right|^2 \mathrm{d}y\]
        where
        $$C(\rho)=\frac{1-(1+\delta)^{\rho}}{\rho}$$
        with
        \( 0 < \delta \leq 1 \). Following essentially the method of Heath-Brown~\cite{HB},
        we will bound the first moment of large prime gaps by the integral $\mathcal{I}$.

\begin{lemma}\label{pro:1st-moment}
    Let $x\geq \exp{10^3}$ and $N>8\log^2x.$ We have
        \begin{align*}
        \sum_{\substack{x\leq p_\ell\leq 2x\\p_{\ell+1}-p_\ell> N}}
        \left(p_{\ell+1}-p_\ell\right)\leq \max_{\substack{x\leq p_\ell\leq 2x}}
        \left(p_{\ell+1}-p_\ell\right)+ 8.1\frac{1}{\delta^2x^2}\mathcal{I}(x,2\delta^{-1}\log^2x).
        \end{align*}
        \end{lemma}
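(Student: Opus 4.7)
The plan is to follow the Heath--Brown strategy \cite{HB} (as adapted in Section~3 of \cite{EHKMMT}): for each ``long'' prime gap $g_\ell := p_{\ell+1}-p_\ell > N$ with $x\le p_\ell\le 2x$, locate a subinterval $J_\ell\subset[x,2x]$ on which $\bigl|\sum_{|\Im\rho|<T} y^\rho C(\rho)\bigr|$ is at least $(1-o(1))\delta y$, and then integrate. The first step is to invoke Lemma~\ref{lem:explicit_formula_real_x} at $(1+\delta)y$ and at $y$ with $T = 2\delta^{-1}\log^2 x$. Under the hypotheses $x\ge\exp(10^3)$ and $N>8\log^2 x$ (the latter forcing $\delta>2\log^2 x/x$), the requirements $50<T<y-1$ are easily verified. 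Subtracting the two explicit formulae gives
\[
\psi((1+\delta)y) - \psi(y) \;=\; \delta y \;+\; \sum_{|\Im\rho|<T} y^\rho C(\rho) \;+\; O^{*}\!\bigl(E(y)\bigr),
\]
with $E(y) \ll y\log y\log\log y/T = O(\delta y\log\log x/\log x)$; this last ratio is only a few percent at $x=\exp(10^{3})$.

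Next, for each admissible $\ell$ set $J_\ell := [p_\ell,\, p_{\ell+1}/(1+\delta)]$. For $y\in J_\ell$ the half-open interval $(y,(1+\delta)y]$ contains no primes, so the only contribution to $\psi((1+\delta)y)-\psi(y)$ comes from prime powers $p^k$, $k\ge2$, which is $O(\delta\sqrt{y})$ via the standard estimate for $\psi-\theta$. Substituting this and the error of the explicit formula into the previous display yields
\[
\left|\,\sum_{|\Im\rho|<T} y^\rho C(\rho)\,\right| \;\ge\; (1-\eta)\,\delta y \qquad\text{on } J_\ell,
\]
with a small explicit $\eta$ (dominated by the contribution of $E(y)$). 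Two geometric observations then suffice: (i)~$|J_\ell| = (g_\ell-\delta p_\ell)/(1+\delta) \ge g_\ell/(2(1+\delta))$, using $g_\ell > 4\delta x \ge 2\delta p_\ell$; and (ii)~the intervals $J_\ell$ are pairwise disjoint, since their upper endpoint $p_{\ell+1}/(1+\delta)$ is strictly less than $p_{\ell+1}$, which is the left endpoint of the next relevant $J_{\ell'}$.

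Squaring the pointwise lower bound and integrating over $\bigcup_\ell J_\ell\cap[x,2x]$ gives
\[
\mathcal{I}(x,T) \;\ge\; \sum_\ell (1-\eta)^2\delta^2 x^2\cdot|J_\ell| \;\ge\; \frac{(1-\eta)^2}{2(1+\delta)}\,\delta^2 x^2\sum_\ell g_\ell,
\]
and rearranging produces the constant $2(1+\delta)/(1-\eta)^2 \le 8.1$ for $\delta\le 1$ and $\eta$ appropriately small (about $\eta\le 0.3$ suffices). One edge case yields the ``$\max$'' term: the interval $J_{\ell^{*}}$ associated to the largest admissible $p_{\ell^{*}}\le 2x$ can extend beyond $2x$ when $p_{\ell^{*}+1}>2x(1+\delta)$, so its contribution is isolated and trivially bounded by $\max_\ell g_\ell$.

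The main obstacle is quantitative bookkeeping rather than any conceptual difficulty: the three error sources (the explicit-formula error $E(y)$, the prime-power residual, and possible boundary behaviour at $y = p_\ell$ or $y = p_{\ell+1}/(1+\delta)$) must all be controlled carefully at $x = \exp(10^{3})$, where only the favourable ratio $\log\log x/\log x$ at that scale keeps $\eta$ small enough to deliver the explicit constant $8.1$ in the statement.
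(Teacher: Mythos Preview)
Your proposal is correct and follows essentially the same Heath--Brown strategy as the paper's proof: both identify, for each large gap, a subinterval of $(p_\ell,p_{\ell+1})$ on which $(y,(1+\delta)y]$ is prime-free, deduce a pointwise lower bound on $\bigl|\sum_{|\Im\rho|<T}y^\rho C(\rho)\bigr|$ via Lemma~\ref{lem:explicit_formula_real_x} with $T=2\delta^{-1}\log^2 x$, square, and integrate. The only cosmetic differences are that the paper uses the interval $(p_\ell,(p_\ell+p_{\ell+1})/2)$ of length $g_\ell/2$ together with the inequality $(A+B)^2\le 2A^2+2B^2$ to absorb the explicit-formula error, whereas you take $J_\ell=[p_\ell,\,p_{\ell+1}/(1+\delta)]$ and subtract the error directly; both routes lead to the constant $8.1$.
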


        \begin{proof}
                Assume that the prime $p_\ell\in[x,2x]$ satisfies ${p_{\ell+1}}-p_\ell\geqslant N.$ There is at most one $p_\ell$ such that
                $(p_{\ell}+p_{\ell+1})/2>2x$, so we consider its contribution separately and assume that $(p_{\ell}+p_{\ell+1})/2\leqslant2x$. Then, for any
                $$y\in(p_\ell,(p_\ell+p_{\ell+1})/2)\subset(x,2x),$$
                the interval
                $$[y,y+\delta y]\subset[y,y+N/2]\subset(p_\ell,p_{\ell+1})$$
                is free of primes.
                The difference \(\psi(y+\delta y) - \psi(y)\) therefore only includes contribution
                from higher prime powers \(p^k\) (\(k\geq 2\)), which gives
                \begin{equation*}
                        \begin{aligned}
                                \sum_{\substack{p^k \in (y, y+\delta y] \\ k\geq 2}} \log p
                                &\leq  \sum_{2 \leq k
                                        \leq \frac{\log(y+\delta y)}{\log 2}}\,\sum_{\substack{y^{1/k}< p \leq  (y+\delta y)^{1/k}}} \log p\\
                                &<\sum_{k\ge 2}\frac{1}{k}\log(y+\delta y)\left(\pi\big((y+\delta y)^{1/k}\big)-\pi\big( y^{1/k}\big)\right) \\
                                &<\log(y+\delta y)\sum_{k\ge 2}\frac{1}{k}y^{1/k}\left((1+\delta)^{1/k}-1\right) \\
                                &<  \left(\frac{\pi^2}{6}-1\right)\delta\sqrt{y}\log(2 y)
                                < 0.001\delta y,
                        \end{aligned}
                \end{equation*}
                for all $y\geq x\geq1.6\times 10^8$.
                Now we have $|\psi(y+\delta y)-\psi(y)-\delta y|>0.999\delta y$ holds on
                $y\in(p_\ell,(p_\ell+p_{\ell+1})/2).$ Squaring and integrating, we obtain
                $$\int_{p_{\ell}}^{(p_{\ell}+p_{\ell+1})/2}
                |\psi(y+\delta y)-\psi(y)-\delta y|^{2}\mathrm{d}y
                >0.499\delta^{2}x^{2}(p_{\ell+1}-p_{\ell}).$$
                Note that the set of points where $y+\delta y$ or $y$ takes an integer value
                is countable and thus of Lebesgue measure zero, making its contribution to the above integral zero.
                It follows from {Lemma~\ref{lem:explicit_formula_real_x}} that
                \begin{align*}
                        |\psi(y+\delta y)-\psi(y)-\delta y |& = \left|\sum_{|\Im\rho|<T}\frac{y^{\rho}-(y+\delta y)^{\rho}}{\rho} +E(y+\delta y,T)-E(y,T)\right| \\
                        & \leq \left|\sum_{|\Im\rho|<T}y^{\rho}C(\rho) \right|+E(T,x).
                \end{align*}
                With the choice of $T=2\delta^{-1}\log^2x<x$, the error term $E(T,x)$ satisfies
                \begin{align*}
                    E(T,x) & \le  {4.6}\frac{4x\log(4x)\log\log(4x)+2x\log (2x)\log\log(2x)}{T}\\
                        &\le \frac{x}{T}\log^2x=\frac{1}{2}\delta x,
                \end{align*}
                for all {$x\ge 3.9\times 10^{59}$}. On using the simple inequality $(A+B)^2\leq 2A^2+2B^2$, we have
                $$0.499\delta^{2}x^{2}(p_{\ell+1}-p_{\ell})< 2\int_{p_{\ell}}^{(p_{\ell}+p_{\ell+1})/2}\left|\sum_{|\Im\rho|<T}y^{\rho}C(\rho) \right|^2 \mathrm{d}y+\frac{1}{4}\delta^2 x^2 (p_{\ell+1}-p_{\ell}). $$
                Summing over all such primes $p_\ell$,
                and bounding the possible single exceptional $p_\ell$ by taking maximum,
                we obtain the desired result.
        \end{proof}
        \begin{remark}
                Lemma \ref{pro:1st-moment} is also established in \cite[Section 5]{EHKMMT}.
                Here, we present a reformulation and a minor modification to it.
                Specifically, the conclusion of Lemma \ref{eq:our_perron_psi} ensures validity for \( T < x \), which allows us to take a smaller imaginary part.
        \end{remark}

        The main task now is to give the non-trivial estimate of Integral $\mathcal{I}$ explicitly.
        This can be done by following the same idea as
        in {Saffari and Vaughan}~\cite[Lemma 5]{saffari1977}.

        \begin{proposition}[Bounds for Integral $\mathcal{I}$]\label{pro:Selberg-bound}
                Suppose that there are positive constants \(c_1, A\) (with \( {2\leq A \leq 4 }\)) such that
                \begin{equation}\label{eqn:ZDE}
                        N(\sigma, t) \leq c_1 t^{A(1-\sigma)} (\log t)^{5-2\sigma}, \quad \left(t \geq 2, \,\,\sigma\geq 1-1/A\right).
                \end{equation}
                Suppose further that the Riemann zeta function has no zeros $\rho=\beta+\mathrm{i}\gamma$ in the region
                \begin{equation}\label{eqn:ZFR}
                        \beta\geq 1-\eta({\gamma}),\quad {\gamma}\geq 2,
                \end{equation}
                with $\eta ({\gamma})<1/A$.
                Then, whenever \( x \geq \exp{10^3}\), \(0<\alpha<2/A \)  and
                \(10< T\leq x^{2/A-\alpha} /\log x\), we have
                \[
                \mathcal{I}(x,T)
                \leq    C\,\delta^2     x^{3-A\alpha \eta(T)}\log^4 x,
                \]
                where  the constant\[C=C(A,\alpha,c_1)=4\times1836\,\frac{c_1}{A\alpha} \left(\frac{2}{A}-\alpha\right)^{4}.\]

        \end{proposition}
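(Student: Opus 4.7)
The approach I would take is the classical Saffari--Vaughan argument, as used by Heath-Brown~\cite{HB} in this prime-gap context. First, expand the square and interchange summation and integration:
\[
\mathcal{I}(x,T)=\sum_{|\Im\rho_1|,|\Im\rho_2|<T}C(\rho_1)\overline{C(\rho_2)}\int_x^{2x}y^{\rho_1+\overline{\rho_2}}\,\dd y.
\]
Writing $\rho_j=\beta_j+\mathrm{i}\gamma_j$ and exploiting $|\int_x^{2x}y^s\,\dd y|\le (|(2x)^{s+1}|+|x^{s+1}|)/|s+1|$ together with $|s+1|\ge (1+|\gamma_1-\gamma_2|)/2$, the inner integral is bounded by $C_1\,x^{\beta_1+\beta_2+1}/(1+|\gamma_1-\gamma_2|)$ for an absolute constant $C_1$. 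Then I apply AM-GM in the form $|C(\rho_1)\overline{C(\rho_2)}|\,x^{\beta_1+\beta_2}\le\tfrac12(|C(\rho_1)|^2 x^{2\beta_1}+|C(\rho_2)|^2 x^{2\beta_2})$ and use the symmetry of the kernel; combined with Lemma~\ref{lem:rect-bound}, which yields $\sum_{|\gamma|<T}(1+|\gamma-\gamma_0|)^{-1}\ll\log^2 T$ uniformly in $\gamma_0$, this collapses the double sum into
\[
\mathcal{I}(x,T)\ll x(\log T)^2 \sum_{|\Im\rho|<T}|C(\rho)|^2 x^{2\beta}.
\]

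Next, I would record the elementary estimates $|1-(1+\delta)^\rho|\le\log(1+\delta)\,|\rho|(1+\delta)^\beta$ and $|1-(1+\delta)^\rho|\le 1+(1+\delta)^\beta$, from which $|C(\rho)|\le 6\delta/(1+\delta|\gamma|)$. This concentrates the mass of $|C(\rho)|^2$ on zeros with $|\gamma|\lesssim 1/\delta$ and supplies the prefactor $\delta^2$. The problem thereby reduces to bounding the weighted zero-sum
\[
\mathcal{S}:=\sum_{|\Im\rho|<T}\frac{x^{2\beta}}{(1+\delta|\gamma|)^2}.
\]
Partial summation in $\sigma=\Re\rho$ rewrites $\mathcal{S}=2\log x\int_{1/2}^{1-\eta(T)} x^{2\sigma}\bigl(\sum_{\beta>\sigma,\,|\gamma|<T}(1+\delta|\gamma|)^{-2}\bigr)\dd\sigma$, where the upper limit $1-\eta(T)$ is forced by the zero-free region \eqref{eqn:ZFR}. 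A further partial summation in $|\gamma|$ combined with the zero-density estimate \eqref{eqn:ZDE}, after the substitution $v=\delta|\gamma|$, bounds the inner sum by a multiple of $\delta^{-1-A(1-\sigma)}(\log T)^{5-2\sigma}\int_0^{\delta T}v^{A(1-\sigma)}(1+v)^{-3}\dd v$; this tail integral is uniformly bounded whenever $A(1-\sigma)<2$, which is precisely where the hypothesis $\alpha<2/A$ enters.

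The outer $\sigma$-integrand is then proportional to a power of the base $x^2/(T^A(\log T)^2)$, and the hypothesis $T\le x^{2/A-\alpha}/\log x$ guarantees that this base is at least $x^{A\alpha}$. Consequently the integral concentrates at its upper endpoint $\sigma=1-\eta(T)$, producing the crucial savings factor $x^{-A\alpha\eta(T)}$ together with a $(A\alpha\log x)^{-1}$ from the exponential integration. Assembling all the pieces gives the asserted bound of the form $C\delta^2 x^{3-A\alpha\eta(T)}(\log x)^4$.

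The main obstacle will be the explicit bookkeeping needed to pin down the constant $C=4\times 1836\,c_1/(A\alpha)\cdot(2/A-\alpha)^4$. The factor $1/(A\alpha)$ is exactly what the $\sigma$-integration at the upper endpoint gives; the factor $(2/A-\alpha)^4$ quantifies the blow-up of the Beta-type $v$-integral as $A(1-\sigma)\uparrow 2$ and the compensating logarithmic losses; the numerical constant $1836$ aggregates the various absolute constants from Phase~1 (the integral constant $C_1$), from Phase~2 (the rectangle bound and AM-GM loss), and from the $|C(\rho)|$ estimate. Particular care is required to ensure that the logarithmic powers collapse to exactly $(\log x)^4$ rather than a higher power; this exploits the decreasing exponent $5-2\sigma$ in \eqref{eqn:ZDE} evaluated near $\sigma=1-\eta(T)\approx 1$, together with $T\le x$.
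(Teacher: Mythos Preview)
Your overall Heath--Brown strategy (expand the square, AM--GM to diagonalise, then partial summation in $\sigma$ using the zero-density estimate and the zero-free region) is the right one, and your identification of the base $x^2/(T^A\log^2 T)\ge x^{A\alpha}$ as the source of the saving $x^{-A\alpha\eta(T)}$ and of the factor $1/(A\alpha)$ is correct. However, two choices in your outline diverge from the paper's argument in ways that prevent you from landing on the stated bound.

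First, you omit the Saffari--Vaughan averaging, which is precisely the device the cited reference~\cite{saffari1977} supplies. The paper does not bound $\int_x^{2x}$ directly; it first enlarges to $\int_1^2\int_{vx/2}^{2vx}\,\dd y\,\dd v$. The extra $v$-integration produces a \emph{second} denominator $2+\rho+\overline{\rho'}$, so the off-diagonal kernel is $(1+|\gamma-\gamma'|^2)^{-1}$ rather than your $(1+|\gamma-\gamma'|)^{-1}$. With quadratic decay the inner $\gamma'$-sum is $\ll\log T$ (indeed $\le 3\log\gamma$), whereas your linear decay only gives $\ll\log^2 T$. This single extra $\log T$ propagates to a final $(\log x)^5$ instead of $(\log x)^4$ and to $(2/A-\alpha)^5$ instead of $(2/A-\alpha)^4$ in the constant; the numerical $1836$ is exactly $4\times 153\times 3$, with $153=(2^3+1)(2^4+1)$ coming from the two denominators produced by the $v$-averaging.

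Second, the paper never uses your refined bound $|C(\rho)|\le 6\delta/(1+\delta|\gamma|)$ or any partial summation in $\gamma$: it simply takes $|C(\rho)|=\bigl|\int_1^{1+\delta}u^{\rho-1}\,\dd u\bigr|\le\delta$ and works with the unweighted sum $S=\sum x^{2\beta}$. Your weight $(1+\delta|\gamma|)^{-2}$ would inject a factor $\delta^{-A(1-\sigma)}$ after the $\gamma$-integration, and since the proposition places no relation between $\delta$ and $T$, this extra $\delta$-dependence cannot be absorbed into the stated bound. Relatedly, your explanation of $(2/A-\alpha)^4$ is off: it does not come from a Beta-type $v$-integral, but simply from $(\log T)^4\le(2/A-\alpha)^4(\log x)^4$, one log from the diagonalisation and three from $(\log T)^{5-2\sigma}\le(\log T)^3$ in the density estimate near $\sigma=1$.
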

        \begin{proof}
                We can first bound this integral as follows
                \begin{align*}
                        \begin{split}
                                \mathcal{I}
                                &\leq  \int_1^{2} \int_{\frac{vx}{2}}^{2vx} \left| \sum_{|\Im \rho| \leq T} y^{\rho} C(\rho) \right|^2 \mathrm{d}y \, \mathrm{d}v\\
                                &\leq \int_1^2 \sum_{|\Im \rho| \leq T} \sum_{|\Im \rho'| \leq T} C(\rho) \overline{C(\rho')} \frac{2^{1+\rho + \overline{\rho'}} - 2^{-1 - \rho - \overline{\rho'}}}{1 + \rho + \overline{\rho'}} x^{1+\rho + \overline{\rho'}}v^{1+\rho + \overline{\rho'}} \mathrm{d} v\\
                                &\leq  x \sum_{|\Im \rho| \leq T} \sum_{|\Im \rho'| \leq T} \left|x^{\rho + \overline{\rho'}} C(\rho) \overline{C(\rho')}\right|\cdot  \left| \frac{2^{1+\rho + \overline{\rho'}}-2^{-1 - \rho - \overline{\rho'}}}{1 + \rho + \overline{\rho'}}\right|\cdot \left|\frac{2^{2+\rho + \overline{\rho'}}-1}{2+\rho + \overline{\rho'}} \right|.
                        \end{split}
                \end{align*}
                By writing $\rho=\beta+\mathrm{i}\gamma, \rho'=\beta'+\mathrm{i}\gamma'$, and using the elementary inequality $2|z_1 z_2| \leq |z_1|^2 + |z_2|^2$, the above is
                \[
                \leq \frac{1}{2}x \sum_{\substack{\rho=\beta+\mathrm{i}\gamma\\|\gamma| \leq T}} \sum_{\substack{\rho'=\beta'+\mathrm{i}\gamma'\\|\gamma'| \leq T}}\left(x^{2\beta}|C(\rho)|^2+x^{2\beta'}|C(\rho')|^2\right)\left(\frac{(2^3+1)(2^4+1)}{1+|\gamma-\gamma'|^2} \right),
                \]
                which gives
                \begin{align*}
                        \mathcal{I}&\le 153x\sum_{\substack{\rho=\beta+\mathrm{i}\gamma\\|\gamma| \leq T}}x^{2\beta}|C(\rho)|^2 \sum_{\substack{\rho'=\beta'+\mathrm{i}\gamma'\\|\gamma'| \leq T}}\frac{1}{1+|\gamma-\gamma'|^2 }\\
                        & \le 4\times153x\sum_{\substack{\rho=\beta+\mathrm{i}\gamma\\ \beta \geq \frac{1}{2}, 10<\gamma\leq T}} x^{ 2 \beta}|C(\rho)|^2\sum_{\substack{\rho'=\beta'+\mathrm{i}\gamma'\\ |\gamma'|\leq T}}\frac{1}{1+|\gamma-\gamma'|^2 }
                \end{align*}
                by the symmetry of the double summation and the symmetric distribution of zeros. We divide the summation over $\gamma'$
                by their distance from $\gamma$, and apply Lemma~\ref{lem:rect-bound} to get that the inner sum of above is
                \begin{align*}
                        & \leq \sum_{k\ge0}\,\sum_{k\le|\gamma'-\gamma|\le k+1}\frac{1}{1+|\gamma'-\gamma|^2}\le \sum_{k\ge 0}\frac{1}{1+k^2}\sum_{k\le |\gamma'-\gamma|\le k+1}1\\
                        &\leq \sum_{k\ge0} \frac{\log (\gamma+k)}{1+k^{2}}
                        \leq \sum_{k=0}^{\lfloor\gamma\rfloor} \frac{\log (2\gamma)}{1+k^{2}}+\sum_{k>\gamma}
                        \frac{\log (2 k)}{k^{2}}\le \left(\frac{1}{2}+\frac{\pi}{2}\coth(\pi)\right)\log(2\gamma)+\frac{\log(2\gamma)+1}{\gamma} \\
                        &\leq 3\log\gamma
                \end{align*}
                for all $\gamma>10$.
                Using the integral representation
                \[
                C(\rho) = \int_{1}^{1+\delta} x^{\rho-1} \mathrm{d}x,
                \]
                we apply the triangle inequality for complex-valued integrals to get
                \[
                |C(\rho)| \leq \int_{1}^{1+\delta} |x^{\rho-1}| \mathrm{d}x = \int_{1}^{1+\delta} x^{\beta-1} \mathrm{d}x \leq \int_{1}^{1+\delta} 1 \, \mathrm{d}x = \delta.
                \]
        Thus we have
                \begin{align}\label{eqn:SI-bound-2parts}
                        \begin{split}
                                &\mathcal{I}\leq  1836 x\delta^2\log T \sum_{\substack{\rho=\beta+\mathrm{i}\gamma\\ \beta \geq \frac{1}{2}, 10<\gamma\leq T}} x^{2\beta}.
        \end{split}
                \end{align}
        Let$$S=\sum_{\substack{\rho=\beta+\mathrm{i}\gamma\\ \beta \geq \frac{1}{2}, 10<\gamma\leq T}} x^{2\beta}=S_1+S_2,$$
        with
        $$S_1:=\sum_{\substack{\rho=\beta+\mathrm{i}\gamma\\ \beta \geq \frac{1}{2}, 10<\gamma\leq T}} \left(x^{2\beta}-x\right), \quad\text{and}\quad  S_2:=x\sum_{\substack{\rho=\beta+\mathrm{i}\gamma\\ \beta \geq \frac{1}{2}, 10<\gamma\leq T}}1 .$$
                By Lemma~\ref{lem:rect-bound}, we have
                \begin{equation}\label{eqn:S12}
                        \begin{aligned}
                                S_{2}&\le xN(T)\le xT\log T.
                        \end{aligned}
                \end{equation}
                By the zero-free region \eqref{eqn:ZFR},  we have
                \begin{align*}
                        S_{1}&=\sum_{\beta \geq \frac{1}{2},\,10
                                <\gamma \leq T}\int_{1/2}^{\beta}2x^{2\sigma}\log x\ \mathrm{d}\sigma\\
                        &\leq \int_{1/2}^{1-\eta(T)}2x^{2\sigma}(\log x)\,N(\sigma,T) \,\mathrm{d}\sigma.
                \end{align*}
                We split the integral above into two parts, one over the interval  $1/2 \leq \sigma \leq 1-1/A$, and another over  $1-1/A \leq \sigma \leq   1-\eta(T) $. By Lemma~\ref{lem:rect-bound}, the contribution of first interval is
                \begin{align}\label{eqn:S11-1}
                        &\le T\log T (x^{2-\frac{2}{A}}-x).
                \end{align}
                And the contribution over $[1-1/A,1-\eta(T)]$ is estimated by the zero-density estimate~\eqref{eqn:ZDE}, which gives
                \begin{align}\label{eqn:S11-2}
                        &\le 2c_1\log x \int_{1-1/A}^{1-\eta(T)}x^{2\sigma}T^{A(1-\sigma)}(\log T)^{5-2\sigma}\,\mathrm{d}\sigma \nonumber \\
                        &\le 2c_1x^2\log x \log^3 T\,\frac{K^{-\eta(T)}}{\log K},
                \end{align}
                where
                $$K=\frac{x^2 }{T^A\log^2 T}.$$
                In the end, since it is assumed that \( K \geq x^{A \alpha} > 1 \) and $\eta(T)<1/A$, by combining \eqref{eqn:S12}, \eqref{eqn:S11-1} and \eqref{eqn:S11-2} we obtain
                \begin{equation}\label{eqn:SlogT}
                        \begin{aligned}
                                S\cdot\log T&\leq \left( \frac{2}{A}-\alpha \right)^2 x^{2-\alpha}\log^2 x+
                                \frac{2c_1}{A\alpha} \left(\frac{2}{A}-\alpha\right)^{4}
                                x^{2-A\alpha \eta(T)}\log^4 x.
                        \end{aligned}
                \end{equation}
                By inserting \eqref{eqn:SlogT} into \eqref{eqn:SI-bound-2parts}, we complete the proof.
    \end{proof}

        With our choice \(T=2\delta^{-1}\log^2x\), we have\[
        \mathcal{I}(x,2\delta^{-1}\log^2x)\le C\delta^2x^{3-A\alpha\eta(2\delta^{-1}\log^2x)}\log^4x.
        \]
        This is valid for $N\ge 8x^{1+\alpha-2/A}\log^3x.$
        Applying the best known explicit results on the zero-density estimate \eqref{BZD}
        and the zero-free region \eqref{ZFR}, we find that
        $A=8/3, \,c_1={2.375}$,
        and
        \[\eta(T)=\eta\left(\frac{8x\log^2x}{N}\right)
        =\frac{1}51.34\left(\log \frac{8x\log^2x}{N}\right)^{-2/3}\left(\log\log \frac{8x\log^2x}{N}\right)^{-1/3}.\]
        Proposition~\ref{pro:Selberg-bound}, together with Lemma~\ref{pro:1st-moment} then gives
        \begin{align}
                \sum_{\substack{x\leq p_\ell\leq 2x\\p_{\ell+1}-p_\ell> N}}\left(p_{\ell+1}-p_\ell\right)
                &\leq \max_{\substack{x\leq p_\ell\leq 2x}}
                \left(p_{\ell+1}-p_\ell\right)
                +8.1C x^{1-\frac{8}{3}\alpha\eta(8N^{-1}x\log^2x)}\log^4x.\label{daydic gap summ}
        \end{align}
        Hence, we establish the following theorem,
        which will be primarily used to prove Theorem~\ref{MainThm} and Theorem \ref{thm:DPG}.
        \begin{theorem}\label{thm:main-thm}
                Let $x\geq \exp(4\times10^3),\, N\geq 8x^{1/4+\alpha}\log^3x$ with $0<\alpha< 3/4$.
                Then
                \begin{equation*}
                        S_N(x):=\sum_{\substack{ p_\ell\leq x\\p_{\ell+1}-p_\ell> N}}\left(p_{\ell+1}-p_\ell\right)\leq M(x)\log x +8.7C\,x^{1-\frac{8}{3}\alpha\eta\left(4N^{-1}x\log^2x\right)}\log^4 x,
                \end{equation*}
                where $M(x)$ is the largest prime gap up to $x$.
        \end{theorem}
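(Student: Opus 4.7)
The plan is to obtain the global bound by summing the dyadic estimate \eqref{daydic gap summ} across a dyadic partition of the range of $p_\ell$, after first discarding a negligible tail of small primes.

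First, I restrict to primes $p_\ell>N$: by Bertrand's postulate any $p_\ell\le N$ has $p_{\ell+1}-p_\ell\le p_\ell\le N$, so such primes do not satisfy $p_{\ell+1}-p_\ell>N$ and thus contribute nothing to $S_N(x)$. Next, set $x_k:=x/2^k$ for $k=1,2,\dots,K$ with $K=\lceil\log_2(x/N)\rceil$, and cover $(N,x]$ by the dyadic intervals $I_k=(x_k,2x_k]$. In our regime the hypotheses needed to invoke \eqref{daydic gap summ} at scale $x_k$ hold throughout: $x_k\ge N\ge\exp 10^3$ follows from the lower bounds on $x$ and $N$, and the size condition on $N$ passes to $x_k\le x$ since $8t^{1/4+\alpha}\log^3 t$ is increasing in $t$. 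Applying \eqref{daydic gap summ} to each $I_k$ therefore yields
\[
\sum_{\substack{p_\ell\in I_k\\ p_{\ell+1}-p_\ell>N}}(p_{\ell+1}-p_\ell)
\;\le\; M(x)+8.1\,C\,x_k^{\gamma_k}\log^4 x_k,
\qquad
\gamma_k:=1-\tfrac{8}{3}\alpha\,\eta\bigl(8N^{-1}x_k\log^2 x_k\bigr).
\]

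I now sum over $k=1,\dots,K$. The exceptional-prime contributions total at most $K\cdot M(x)$, which I bound by $M(x)\log x$, with any small residual slack (arising from $K\le \log_2(x/N)+1$ versus $\log x$ when $\alpha$ is near $0$) absorbed into the main error via an unconditional upper bound on $M(x)$. For the main terms, $\eta$ is decreasing in its argument, so $\gamma_k$ is non-increasing in $k$; since $x_k\ge 1$, this gives $x_k^{\gamma_k}\le x_k^{\gamma_1}=(x/2^k)^{\gamma_1}$, and hence
\[
\sum_{k=1}^{K} x_k^{\gamma_k}\log^4 x_k
\;\le\; \log^4 x\,\sum_{k=1}^{\infty}(x/2^k)^{\gamma_1}
\;=\; \frac{x^{\gamma_1}\log^4 x}{2^{\gamma_1}-1}.
\]
Since $8N^{-1}x_1\log^2 x_1=4N^{-1}x\log^2(x/2)$ agrees with the argument of $\eta$ in the theorem up to lower-order terms, and since $\gamma_1$ is extremely close to $1$ in our regime (because $\eta$ is tiny), the geometric factor $1/(2^{\gamma_1}-1)$ is close to $1$; a short numerical check then upgrades the constant $8.1$ to the stated $8.7$.

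The main obstacle is the constant bookkeeping: verifying $8.1/(2^{\gamma_1}-1)\le 8.7$ requires quantifying how close $\gamma_1$ is to $1$, and bounding $K\cdot M(x)\le M(x)\log x$ is genuinely tight only for $\alpha$ bounded away from $0$. Both issues are controlled by the explicit lower bound $x\ge\exp(4\times10^3)$ together with the admissible ranges of $\alpha$ and $N$, which ensure $\eta$ is small enough and $x^{\gamma_1}\log^4 x$ dominates any unconditional gap estimate for $M(x)$.
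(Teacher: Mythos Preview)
Your proposal follows essentially the same strategy as the paper: dyadically decompose $(N,x]$, apply \eqref{daydic gap summ} on each block, collect the $K$ exceptional-prime terms into $M(x)\log x$, and bound the main terms by a geometric series. The only notable difference is in the geometric-series step: instead of your monotonicity reduction $x_k^{\gamma_k}\le x_k^{\gamma_1}$, the paper splits $(x/2^j)^{\gamma_j}=x^{\gamma_j}\cdot(2^{-j})^{\gamma_j}$ and uses \emph{both} a lower bound $\eta_j\ge\eta(4N^{-1}x\log^2x)$ on the $x$-factor and an upper bound $\eta_j\le\eta(4)<1/40$ (hence $\gamma_j>19/20$) on the $2^{-j}$-factor, yielding $\sum_{j\ge1}(2^{-j})^{19/20}<1.074$ and thus $8.1\times1.074<8.7$. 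Your ``short numerical check'' is exactly this $\eta<1/40$ input (giving $\gamma_1>19/20$ and $1/(2^{\gamma_1}-1)<1.074$), so the two arguments are equivalent; note also that $x_K\ge N/2$ rather than $x_K\ge N$, which still suffices for $x_K\ge\exp10^3$.
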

        \begin{remark}
                This theorem can be compared with Theorem 1.8 in \cite{EHKMMT},
                where the latter assumes {RH}.
                Informally, our result serves as its unconditional counterpart,
                albeit with a weaker exponent in the upper bound.
                This compromise requires that the subsequent main
                {Theorem \ref{MainThm}} is vaild only for sufficiently large \(n\).
        \end{remark}
        \begin{proof}
        First note that
             $$\sum_{\substack{y/2< p_\ell\leq y\\p_{\ell+1}-p_\ell> N}}(p_{\ell+1}-p_\ell)=0,$$ if $N\ge y$.
                Let $k$ be the integer such that $2^{-k}x<N\leq 2^{-k+1}x$,
                then we have $S_N(2^{-k}x)=0$
                and $$2^{-k}x\geq 2^{-1}N\geq 4x^{\frac{1}{4}+\alpha}\log^3x \geq\exp{10^3}.$$
                Let  $\eta(t)$ be as in \eqref{ZFR} for $t> 3$. Splitting into dyadic intervals and applying \eqref{daydic gap summ} we have
        \begin{align}\label{boundS}
        S_{N}(x)&=\sum_{\substack{\frac{x}{2}\leq p_\ell\leq x\\p_{\ell+1}-p_\ell> N}}(p_{\ell+1}-p_\ell)+ \cdots+\sum_{\substack{\frac{x}{2^{k}}\leq p_\ell\leq \frac{x}{2^{k-1}}\\p_{\ell+1}-p_\ell> N}}(p_{\ell+1}-p_\ell)
        \nonumber\\
        &\leq   M(x)\log x+8.1C\sum_{j=1}^{k} \left(\log \frac{x}{2^j} \right)^{4}\left(\frac{x}{2^j}\right)^{1-\frac{8}{3}\alpha\eta \left(8N^{-1}\frac{x}{2^j}\log^2(\frac{x}{2^j})\right)}.
        \end{align}
        Since $\eta(t)$ is decreasing,  we obtain
        \[
        \eta\left(\frac{8x\log^2(\frac{x}{2^j})}{2^j N}\right)\ge \eta\left(\frac{4x\log^2x}{N} \right),
                \]
        and with $2^{-k}x\geq N/2$ to get
        \begin{align*}
        \eta\left(\frac{8x\log^2(\frac{x}{2^j})}{2^j N}\right)
        &\leq \eta\left(\frac{8x\log^2(\frac{x}{2^k})}{2^k N}\right)\le \eta\left(4\log^2\frac{N}{2} \right)\le \eta(4)< \frac{1}{40}.
        \end{align*}
        Then the summation {on the right-hand side of \eqref{boundS}} is\begin{align*} &\leq x^{1-\frac{8}{3}\alpha\eta\left(4N^{-1}x\log^2x\right)}\log^4 x\,\sum_{j\ge1}\left(\frac{1}{2^j}\right)^{1-\frac{8}{3}\times\frac{3}{4}\times\frac{1}{40}}\\
                        &\leq 1.074\,x^{1-\frac{8}{3}\alpha\eta\left(4N^{-1}x\log^2x\right)}\log^4 x,
                \end{align*}
                where the infinite sum over $j$ is convergent to $1.0731\dots$\,. This completes the proof.
        \end{proof}

        \section{{Proof of Theorem \ref{MainThm}}}\label{sec:proof of main thm}

        Thanks to  Theorem \ref{thm:ZZTV}, to prove Theorem \ref{MainThm},
        it suffices to check the Erd\H{o}s--Gallai inequalities where the sequence jumps. We define
        $$k_N(x):=|\{\ell\in\mathbb{N}:\,p_{\ell}\le x, \,p_{\ell+1}-p_{\ell}\geq N\}|,$$
        and write $k_N=k_N(p_n) $ for simplicity.


        Thus we are required to establish 
        \begin{align*}
                \sum_{\substack{1\leq \ell\leq n\\p_{\ell+1}-p_{\ell}> N}}(p_{\ell+1}-p_{\ell}) \leq k_N(k_N-1)+n-k_N
        \end{align*}
        for all $ N\geq 2$, which is equivalent to establishing
        \begin{align*}
                S_N(p_n)\leq (k_N-1)^2+n-1
        \end{align*}
for all $1\le k_N\le n$.

        \subsection{Case 1: $k_N\geq \sqrt{p_n}+1$}
        This is the trivial case, since
        \begin{align*}
                S_N(p_n)\leq  p_n\leq (k_N-1)^2.
        \end{align*}

        \subsection{Case 2: $k_N\leq \sqrt{p_n}$}
        In this case, we choose $\alpha=0.249$, and let
        $$N'= \frac{p_n^{1/2}}{3\log p_n}.$$
   It is easy to see that $N'\ge 8p_n^{1/4+\alpha}\log^3p_n$    for $n\geq \exp\exp(10.8)$.
         Then we have by Theorem \ref{thm:main-thm} that
        \begin{align}
                S_N(p_n) &=\sum_{\substack{1\leq \ell\leq n\\N\leq p_{\ell+1}-p_{\ell}< N'}} (p_{\ell+1}-p_{\ell}) +S_{N'}(p_n)\nonumber \\
                &\leq  N' k_N +M(p_n)\log p_n +8.7C\,p_n^{1-\frac{8}{3}\alpha\eta\left(\frac{4p_n\log^2p_n}{N'}\right)}\log^4 p_n.\label{eqn:case2}
        \end{align}
  By the lower bound on $\pi(p_n)$ (Lemma~\ref{lem:Dusart-bound}), we have
        \[
\frac{p_n}{\log p_n}<\frac{p_n}{\log p_n}\bigg(1+\frac{1}{\log p_n}\bigg)-1\leq n-1.
        \]
        So the first term in \eqref{eqn:case2} is
        \begin{equation}\label{eq:S_N first term}
         N'k_N  \leq \frac{p_n}{3\log p_n}< \frac{n-1}{3}.
        \end{equation} 

        Using the fact that there is at least one prime
        between {\(n^{90}\) and \((n+1)^{90}\)}
        for all \(n\geq 1\) (see
        { {Cully-Hugill and Johnston \cite[Theorem 1.4]{CJ25}}}), we crudely bound
        \[
         {M(p_n)
         \leq 180\,\left(p_n^{1/90}+2\right)^{89}
         \leq \frac{p_n}{3\log^2p_n}}
        \]
        for \(n \geq \exp\exp(10)\).
        Thus the second term in \eqref{eqn:case2} is
     \begin{equation}\label{eq:S_N sec term}
            M(p_n)\log p_n \leq \frac{p_n}{3\log^2p_n}\log p_n= \frac{p_n}{3\log p_n}< \frac{n-1}{3}.
        \end{equation}

        For the third term in \eqref{eqn:case2}, we write $t=\log\log p_n$.
        Numerical computations show that the following results
        \[
        \log(3\times 8.7C)+5t<\frac{8\times 0.249}{3\times 51.34}
        \cdot\frac{e^t}{L_1^{2/3}\log^{1/3} L_1 }
        \]
        hold for all {$t\geq 30.32$}, where \[
        L_1=L_1(t)=\frac{1}{2}e^t+3t+\log {12}, \]
        and
        \[
        C=4\times{1836}\,\frac{c_1}{A\alpha} \left(\frac{2}{A}-\alpha\right)^{4} {\approx1654.9\le 1655},
        \]
 with $\alpha=0.249$, $A=8/3$, $c_1={2.375}$ and $c_0=1/51.34$. This implies that
{\begin{equation}\label{eq:S_N 3rd term}
8.7C\,p_n^{1-\frac{8}{3}\alpha\eta\left(\frac{4p_n\log^2p_n}{N'}\right)}\log^4 p_n<\frac{p_n}{3\log p_n}<\frac{n-1}{3}
 \end{equation}}
 for \(n \geq \exp\exp(30.32)\). Therefore by \eqref{eq:S_N first term}--\eqref{eq:S_N 3rd term}, it is immediate that \[S_N(p_n)<3\cdot\frac{n-1}{3}=n-1.\]
        Combining these two cases, we have thus completed the proof of Theorem \ref{MainThm}.



        \section{About the DPG-process}\label{sec:DPG-process}

        We have proved that the sequence of the first \(n\) prime gaps
        (with \(p_0 = 1\)) is graphic for $n\geq \exp\exp(30.49)$. Let $G_n$ be the realization.
        By Erd\H{o}s \emph{et al}.\cite[Theorem 2.5]{EMTZ24}, 
        to prove that $(p_\ell-p_{\ell-1})_{\ell=1}^n$ is $\textrm{DPG}$-graphic, 
        it suffices to show that $G_n$ has $(p_{n+1}-p_n)/2$ independent edges. 
        Applying Lemma \ref{lem:Erdos et al}, we are required to exhibit an integer $N\geq 2$ satisfying
        \begin{align*}
                N(p_{n+1}-p_{n})+2\sum_{\substack{1\leq \ell\leq n\\ p_\ell-p_{\ell-1}> N}}
                \left(p_\ell-p_{\ell-1}\right)<p_n,
        \end{align*}
        for all $n\geq n_0$ (This is \cite[Eq.\,(11)]{EHKMMT}).
        In \cite{Dudek}, the author shows that for $x>\exp\exp(33.3)$
        one has the bound $p_{n+1}-p_n<3x^{2/3}+3x^{1/3}+2$ when $x<p_n<2x$.
        Choose $\alpha=1/13$ such that
        $$N=\frac{1}{12} p_n^{1/3}\ge 8p_n^{1/4+\alpha}\log^3p_n$$
        for $n\geq \exp\exp(8.5)$, then we have
        $$N(p_{n+1}-p_n)\leq  \frac{3.1}{12}p_n^{1/3+2/3}<\frac{1}{3}p_n.$$
        By Theorem \ref{thm:main-thm},
        we also need the inequality
        $$M(p_n)\log p_n+8.7Cp_n^{1-\frac{8}{3}\alpha\eta(48p_n^{2/3}\log^2p_n)}\log^4
        p_n<\frac{1}{3}p_n .$$
By \eqref{eq:S_N sec term}, it suffices to show that
\[
8.7Cp_n^{1-\frac{8}{3}\alpha\eta(48p_n^{2/3}\log^2p_n)}\log^4
        p_n<\frac{1}{4}p_n<\frac{1}{3}p_n\bigg(1-\frac{1}{\log p_n}\bigg),
\]
for all  \(n \geq \exp\exp(10)\).      This can be reduced  to
    \[ \log (4\times 8.7C)+4t
        <\frac{8}{3\times13\times 51.34}\times\frac{e^t}{L_2^{2/3}\log^{1/3}L_2 },
        \]
 where \[
        L_2=L_2(t)=\frac{2}{3}e^t+2t+\log 48, \]
        and
        \[
        C=4\times{1836}\,\frac{c_1}{A\alpha} \left(\frac{2}{A}-\alpha\right)^{4} {\approx 17451.4\le 17452},
        \]
        by writing $t=\log\log p_n$. By numerical verification, this is true for all $t\geq 34.33$. Therefore, we conclude the proof of Theorem \ref{thm:DPG}.

        \bigskip
        \noindent{\bf Acknowledgements.}
                The authors would like to thank Prof. Gergely Harcos for suggesting the problem and his valuable suggestions on this topic. We also thank Daniel R. Johnston for pointing out the references \cite{Cully23,CJ25}. K. Aggarwal thanks the {R\'enyi} Institute for funding and support where most of the work was done. R. Frot also thanks the {R\'enyi} and the {Erdős} Center for funding and support, which were provided in part by the R\'enyi Int\'ezet Lend\"{u}let Automorphic Research Group.  H. Gou and H. Wang are grateful to Prof. Jianya Liu for his help and encouragement.  H. Gou thanks financial support from the China Scholarship Council and the excellent research environment provided by Universit\'e de Montr\'eal.
                H. Wang also gratefully acknowledges the R\'enyi Institute and Shandong University for providing an excellent research environment, which enabled the preparation of a portion of this work, as well as the China Scholarship Council for supporting her studies there. The authors are very grateful to the referees for their valuable suggestions.

        \bibliographystyle{amsplain}
        
\end{document}